\theoremstyle{plain}
\newtheorem{corollary}{Corollary}[section]
\newtheorem{definition}[corollary]{Definition}
\newtheorem{lemma}[corollary]{Lemma}
\newtheorem{thm}[corollary]{Theorem}
\newfont{\sBlackboard}{msbm10 scaled 900}
\newcommand{\mylabel}[1]{\label{#1}
	\ifx\undefined\stillediting
	\else \fbox{$#1$}\fi }
\newcommand{\BE}{\begin{equation}}
\newcommand{\EEQ}{\end{equation}}
\newcommand{\rfb}[1]{\mbox{\rm
		(\ref{#1})}\ifx\undefined\stillediting\else:\fbox{$#1$}\fi}
\newfont{\Blackboard}{msbm10 scaled 1200}
\newfont{\roma}{cmr10 scaled 1200}
\newcommand{\bb}{\begin{equation}}
\newcommand{\bbb}{\end{equation}}
\DeclareMathOperator{\meas}{meas}
\newcommand{\mm}    {{\hbox{\hskip 0.5pt}}}
\newcommand{\bluff} {{\hbox{\raise 15pt \hbox{\mm}}}}
\def\section{\@startsection {section}{1}{\z@}{-3.5ex plus -1ex minus
		-.2ex}{2.3ex plus .2ex}{\large\bf}}
\begin{document}
\title{Infinitely many solutions for a class of fractional Orlicz-Sobolev Schr\"{o}dinger equations }
\author{Sabri Bahrouni and Hichem Ounaies}

\maketitle

\begin{abstract}
In the present paper, we deal with a new compact embedding theorem for a subspace of the new fractional Orlicz-Sobolev spaces. We also establish some useful inequalities which yields to apply the variational methods. Using these abstract results, we study the existence of infinitely many nontrivial solutions for a class of fractional Orlicz-Sobolev Schr\"{o}dinger equations whose simplest prototype is
$$(-\triangle)^{s}_{m}u+V(x)m(u)u=f(x,u),\ x\in\mathbb{R}^{N},$$
where $0<s<1$, $N\geq2$, $(-\triangle)^{s}_{m}$ is fractional $M$-Laplace operator and the nonlinearity $f$ is sublinear as $|u|\rightarrow\infty$. The proof is based on the variant Fountain theorem established by Zou.
\end{abstract}
{\small \textbf{Keywords:} Fractional Orlicz-Sobolev space, Compact embedding theorem, Fractional $M-$Laplacian, Fountain Theorem.

\section{Introduction and main result}

In this paper, we are concerned with the study of the nonlinear fractional $M$-Laplacian equation:
\begin{equation}\label{eq}
(-\triangle)^{s}_{m}u+V(x)m(u)u= f(x,u),\ x\in\mathbb{R}^{N}, \\
\end{equation}
where $0<s<1$, $N\geq2$ and $M(t)=\displaystyle\int_{0}^{|t|}m(s)sds.$  \\

In the last years, problem \eqref{eq} has received a special attention for the
case where $M(t)=\frac{1}{2}|t|^{2}$, that is, when it is of the form
\begin{equation}\label{eq1}
  -\triangle u+V(x)u= f(x,u),\ x\in\mathbb{R}^{N}.
\end{equation}
We do not intend to review the huge bibliography of equations like \eqref{eq1}, we just emphasize that the most famous conditions on the potential $V:\mathbb{R}^{N}\rightarrow\mathbb{R}$
are the following:\\

\noindent $(V_{1})$ $\quad\ \ \ V\in C(\mathbb{R}^{N},\mathbb{R})$ and $\inf_{x\in\mathbb{R}^{N}}V(x)\geq V_{0}>0$.\\

\noindent $(V_{2})$ \quad\ \ \  There exists $\nu> 0$ such that $$\lim_{|y|\rightarrow+\infty}meas(\{x\in\mathbb{R}^{N}:\ |x-y|\leq \nu,\ V(x)\leq L\})=0,\ \forall L>0,$$
where $meas(.)$ denotes the Lebesgue measure in $\mathbb{R}^{N}$. We quote here \cite{Anouar, Zhang, ZXu} where the existence of infinitely many nontrivial solutions for the equation \eqref{eq1} have been obtained in connection with the geometry of the function $V$.\\

For the case where $s=1$, problem \eqref{eq} becomes
$$-\triangle_{m}u+V(x)m(u)u= f(x,u),\ x\in\mathbb{R}^{N},$$ where the operator $\triangle_{m}u=div(m(|\nabla u|)\nabla u)$ named $M$-Laplacian. The reader can find more details involving this subject in \cite{Alves, Rad, Rad1, Rad2} and their references.\\

Notice that when $0<s<1$ and $M(t)=\frac{1}{p}|t|^{p}$ where $p>1$ the problem \eqref{eq} gives back the fractional Schr\"{o}dinger equation
\begin{equation}\label{eq2}
  (-\triangle)^{s}_{p}u+V(x)|u|^{p-2}u= f(x,u),\ x\in\mathbb{R}^{N} ,
\end{equation}
where $ (-\triangle)^{s}_{p}$ is the non-local fractional $p$-Laplacian operator. Concerning the equation \eqref{eq2}, in the last decade, many several existence and multiplicity results have been obtained by using different variational methods.
In \cite{Rad 2}, the authors studied the existence of multiple ground state solutions for the problem \eqref{eq2}, when the nonlinear term $f$ is assumed to have a superlinear behaviour at the origin and a sublinear decay at infinity.
Ambrosio \cite{Ambrosio} established an existence of infinitly solutions for the problem \eqref{eq2}, when $f$ is $p$-superlinear and $V (x)$ can change sign. Moreover, fractional Schr\"{o}dinger-type problems have been considered in some interesting papers \cite{pucci, chang, Torres}. The literature on non-local operators and on their applications is very interesting and, up to now, quite large. After the seminal papers by Caffarelli {\it et al.} \cite{11,12,13}, a large amount of papers were written on problems involving the fractional diffusion operator $(-\Delta)^{s}$  ($0<s<1$). We can quote \cite{4,14,23,25,26} and the references therein.  We also refer to the recent monographs \cite{14,22} for a thorough variational approach of  non-local problems.\\

Contrary to the classical fractional Laplacian case that is widely investigated, the situation seems to be in a developing state when the new fractional $M$-Laplacian is present. In this context, the natural setting for studying problem \eqref{eq} are fractional Orlicz-Sobolev spaces. Currently, as far as we know, the only results for fractional Orlicz-Sobolev spaces and fractional $M$-Laplacian operator are obtained in \cite{Azroul, Sabri, 7}. In particular, in \cite{7}, the authors define the fractional order Orlicz-Sobolev space associated to an $N$-function $M$ and a fractional parameter $0<s<1$ as
$$W^{s,M}(\Omega)=\bigg{\{}u\in L^{M}(\Omega):\ \int_{\mathbb{R}^{N}}\int_{\mathbb{R}^{N}}M\bigg{(}\frac{u(x)-u(y)}{|x-y|^{s}}\bigg{)}\frac{dxdy}{|x-y|^{N}}<\infty\bigg{\}}.$$

The previous definition creates problems in the calculus and in the embedding results, for example, the Borel measure defined as $d\mu=\frac{dxdy}{|x-y|^{N}}$ is not finish in the neighbourhood of the origin, that's whay, in \cite{Azroul}, the authors introduced another definition of the fractional Orlicz-Sobolev space, i.e,
$$W^{s,M}(\Omega)=\bigg{\{}u\in L^{M}(\Omega):\ \exists\lambda>0/\ \int_{\Omega}\int_{\Omega}M\bigg{(}\frac{\lambda(u(x)-u(y))}{|x-y|^{s}M^{-1}(|x-y|^{N})}\bigg{)}dxdy<\infty\bigg{\}}.$$
The authors in \cite{Sabri} gave some further basic properties both on this function space and the related nonlocal operator. \\

Motivated by the above papers, under the suitable conditions $(V _{1} )$ and $(V_{2} )$ on the potential $V$ and
exploiting the variant Fountain theorem,  we aim to study the multiplicity of nontrivial weak solutions to \eqref{eq} where the new fractional $M$-Laplacian is present. In this spirt, we deal with a new compact embedding theorem, also, we establish some useful inequalities  which yields to apply the variational methods.
As far as we know, all these results are new.\\

 Related to functions $M$ and $f$, our hypotheses are the following:\\

\textbf{Conditions on} $m$ and $M$:\\

The function $m: \mathbb{R}_{+}\rightarrow\mathbb{R}_{+}$ is a $C^{1}$-function satisfying
\begin{enumerate}
  \item [($m_{1}$)] $m(t)$, $(m(t)t)^{'}>0$ for all $t>0$.
  \item [($m_{2}$)] There exist $l,r\in]1,N[$ such that $$l\leq r<l^{*}=\frac{Nl}{N-l}\ \ \ \text{and}\ \ \  l\leq \frac{m(t)t^{2}}{M(t)}\leq r,\ \forall t\neq0,$$
  where $$M(t)=\int_{0}^{|t|}m(s)sds.$$
\end{enumerate}
Moreover, $m_{*}(t)t$ is such that the Sobolev conjugate function $M_{*}$ of $M$ is its primitive; that is, $$M_{*}(t)=\int_{0}^{|t|}m_{*}(s)sds.$$
  $(M_{1})$ \ \ There exists a positive constant $C$ such that
   $$C |t|^{\mu}\leq M_{*}(at),\ \forall a,t\geq 0,\ \forall 1<\mu\leq r,$$
  where $M_{*}$ is the Sobolev conjugate of $M$.\\

\noindent $(M_{2})$ $\lim_{|t|\rightarrow+\infty}\displaystyle\frac{C|t|^{\mu}}{M(t)}=0,\ \forall 1<\mu\leq r.$\\

\noindent $(M_{3})$ The function  $t\mapsto M(\sqrt{t}),\ t\in[0,\infty[$ is convex.\\

 \textbf{Conditions on} $f$:\\

 \noindent $(f_{1})$ $f(x,u)=p\xi(x)|u|^{p-2}u$, where $1<p<l$ is a constant and $\xi:\ \mathbb{R}^{N}\rightarrow\mathbb{R}$ is a positive continuous function such that $\xi\in L^{\frac{r}{r-p}}(\mathbb{R}^{N})$.\\

We mention some examples of functions $M$, whose function $m(t)$ satisfies the conditions $(m_{1})$-$(m_{2})$. The examples are the following:
\begin{enumerate}
  \item $M(t)=|t|^{p}$ for $1<p<N$.
  \item $M(t)=|t|^{p}+|t|^{q}$ for $1<p<q<N$ and $q\in]p,p^{*}[$ with $p^{*}=\frac{Np}{N-p}$.
  \item $M(t)=(1+|t|^{2})^{\gamma}-1$ for $1<\gamma<\frac{N}{N-2}$.
\end{enumerate}


Using the above hypotheses, we are able to state our main result.

\begin{thm}\label{thm1}
  Suppose that $(m_{1})-(m_{2})$, $(M_{1})-(M_{3})$, $(V_{1})-(V_{2})$ and $(f_{1})$ hold. Then, problem \eqref{eq} possesses infinitely many nontrivial solutions.
\end{thm}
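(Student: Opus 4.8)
The plan is to recast \eqref{eq} as a critical point problem for an even $C^{1}$ functional on a suitable subspace of the fractional Orlicz--Sobolev space and then to apply Zou's variant Fountain theorem to a one-parameter family of functionals.

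First I would fix the working space
$$W=\Big\{u\in W^{s,M}(\mathbb{R}^{N}):\ \int_{\mathbb{R}^{N}}V(x)M(|u|)\,dx<\infty\Big\},$$
equipped with the Luxemburg-type norm built from the Gagliardo modular $\int\!\!\int M\big(\frac{u(x)-u(y)}{|x-y|^{s}M^{-1}(|x-y|^{N})}\big)\,dxdy$ together with $\int_{\mathbb{R}^{N}} V(x)M(|u|)\,dx$. Using $(m_{1})$, $(m_{2})$ and $(M_{3})$ one checks that $W$ is a reflexive (in fact uniformly convex) Banach space; using $(V_{1})$--$(V_{2})$ together with $(M_{1})$--$(M_{2})$ one obtains that $W$ embeds continuously into $L^{\mu}(\mathbb{R}^{N})$ for $1<\mu\le r$ and \emph{compactly} into $L^{r}(\mathbb{R}^{N})$ --- this is precisely the compact embedding theorem announced in the introduction, and it is the main analytic ingredient. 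The associated energy functional is
$$\Phi(u)=\int_{\mathbb{R}^{N}}\!\!\int_{\mathbb{R}^{N}} M\Big(\frac{u(x)-u(y)}{|x-y|^{s}M^{-1}(|x-y|^{N})}\Big)\,dxdy+\int_{\mathbb{R}^{N}}V(x)M(|u|)\,dx-\int_{\mathbb{R}^{N}}\xi(x)|u|^{p}\,dx,$$
which, by $(m_{2})$, $(f_{1})$ and the embeddings, is well defined, of class $C^{1}$ on $W$, even, and has the weak solutions of \eqref{eq} as its critical points.

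Then I would set up the Fountain geometry. Write $\Phi_{\lambda}=A-\lambda B$ for $\lambda\in[1,2]$, where $A$ is the modular part and $B(u)=\int_{\mathbb{R}^{N}}\xi(x)|u|^{p}\,dx\ge0$; here $B$ maps bounded sets to bounded sets (H\"older with $\xi\in L^{\frac{r}{r-p}}$ and $W\hookrightarrow L^{r}$), and, by $(m_{2})$ and the modular--norm inequalities, $A(u)\to\infty$ as $\|u\|\to\infty$, so every $\Phi_{\lambda}$ is coercive, uniformly in $\lambda$. Fix a basis generating subspaces $X_{j}$ and put $Y_{k}=\bigoplus_{j\le k}X_{j}$, $Z_{k}=\overline{\bigoplus_{j\ge k}X_{j}}$, and $\beta_{k}=\sup\{\|u\|_{L^{r}}:u\in Z_{k},\ \|u\|=1\}$, which tends to $0$ by the compact embedding. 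On $Y_{k}$ all norms are equivalent, so since $1<p<l$ and $A(u)\le\|u\|^{l}$ near the origin one gets $\max_{u\in Y_{k},\ \|u\|=r_{k}}\Phi_{\lambda}(u)<0$ for $r_{k}$ small; on $Z_{k}$, for $\|u\|\le1$ one has $\Phi_{\lambda}(u)\ge\|u\|^{r}-2C_{0}\beta_{k}^{p}\|u\|^{p}$, so choosing $\rho_{k}\asymp\beta_{k}^{p/(r-p)}\to0$ yields $\inf_{u\in Z_{k},\ \|u\|=\rho_{k}}\Phi_{\lambda}(u)\ge0$ while $\inf_{u\in Z_{k},\ \|u\|\le\rho_{k}}\Phi_{\lambda}(u)\ge -C\beta_{k}^{\,pr/(r-p)}\to0^{-}$ and is at the same time $\le\Phi_{\lambda}(0)=0$; shrinking $r_{k}$ keeps $\rho_{k}>r_{k}>0$. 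These are exactly the hypotheses of Zou's variant Fountain theorem, so for a.e.\ $\lambda\in[1,2]$ there is a sequence $\{u_{k}(\lambda)\}_{k\ge k_{0}}$ of critical points of $\Phi_{\lambda}$ with $\Phi_{\lambda}(u_{k}(\lambda))\to0^{-}$ as $k\to\infty$. The compactness needed here --- that bounded Palais--Smale sequences of $\Phi_{\lambda}$ converge strongly --- comes from weak convergence together with the compact embedding into $L^{r}$ for the $B'$ term, and from the $(S_{+})$/monotonicity property of $(-\triangle)^{s}_{m}$, which follows from $(m_{1})$, $(m_{2})$ and $(M_{3})$.

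Finally I would let $\lambda\to1^{+}$. Choose $\lambda_{n}\downarrow1$ inside the full-measure set above; for each fixed $k$ the family $\{u_{k}(\lambda_{n})\}_{n}$ is bounded in $W$ (uniform coercivity plus the two-sided control $c_{k}\le\Phi_{\lambda_{n}}(u_{k}(\lambda_{n}))\le b_{k}<0$), and since $\Phi'(u_{k}(\lambda_{n}))=(\lambda_{n}-1)B'(u_{k}(\lambda_{n}))\to0$ it is a bounded Palais--Smale sequence for $\Phi=\Phi_{1}$; by the compactness above a subsequence converges to some $u_{k}$ with $\Phi'(u_{k})=0$, $\Phi(u_{k})\le b_{k}<0=\Phi(0)$ and $\Phi(u_{k})\to0^{-}$. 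Since a sequence of negative critical values tending to $0$ cannot take only finitely many values, $\{u_{k}\}$ contains infinitely many distinct nontrivial critical points of $\Phi$, that is, infinitely many nontrivial solutions of \eqref{eq}. The chief obstacle is the compactness invoked twice above: proving the compact embedding $W\hookrightarrow\hookrightarrow L^{r}(\mathbb{R}^{N})$ under $(V_{1})$--$(V_{2})$ and the $(S_{+})$ property of the fractional $M$-Laplacian from $(m_{1})$--$(m_{2})$ and $(M_{3})$, along with the careful bookkeeping of the Orlicz modular inequalities (the cases $\|u\|\le1$ versus $\|u\|\ge1$) in the geometric estimates.
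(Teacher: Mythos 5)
Your proposal is correct in outline and follows essentially the same route as the paper: the same weighted space $E$, the same family $I_{\lambda}=A-\lambda B$ fed into Zou's variant Fountain theorem, the same geometric estimates driven by $l_{k}=\sup\{\|u\|_{L^{r}}:u\in Z_{k},\ \|u\|=1\}\rightarrow 0$ together with the norm--modular inequalities $\xi_{0},\xi_{1}$, and the same passage to the limit $\lambda_{n}\rightarrow 1$ via boundedness of the approximate critical points and an $(S_{+})$-type strong convergence. The two ingredients you defer as the chief obstacles are precisely the paper's Lemma \ref{lem1} and Corollary \ref{cor1} (compact embedding of $E$ into $L^{\mu}(\mathbb{R}^{N})$ for $1<\mu\leq r$, proved by covering $\mathbb{R}^{N}$ with balls of radius $\nu$ and splitting $B_{R}^{c}$ according to whether $V(x)\geq L$ or $V(x)<L$) and Lemma \ref{lem4} (strong convergence from $u_{n}\rightharpoonup u$ and $\langle A'(u_{n}),u_{n}-u\rangle\rightarrow 0$, obtained from convexity, weak lower semicontinuity and a Clarkson-type inequality coming from $(M_{3})$), and you correctly identify the hypotheses each of them requires.
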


This paper is organized as follows. In Section $2$, we give some
definitions and fundamental properties of the spaces
$L^{M}(\Omega)$ and $W^{s,M}(\Omega)$. In Section $3$,  we prove some basic properties of the fractional Orlicz-Sobolev space and we show a compact embedding type theorem. Finally, in Section $4$, using a variant Fountain theorem, we prove our main result.

\section{Preliminaries}

In this preliminary section, for the reader's convenience, we make a brief overview on the fractional Orlicz-Sobolev spaces studied in \cite{Azroul}, and the associated fractional $M$-laplacian operator.\\

 Let $M: \mathbb{R}\rightarrow\mathbb{R}_{+}$ be an $N$-function, i.e,

\begin{enumerate}
	\item $M$ is even, continuous, convex, with $M(t)>t$ for $t>0$,
	\item $\frac{M(t)}{t}\rightarrow 0$ as $t\rightarrow0$ and  $\frac{M(t)}{t}\rightarrow +\infty$ as $t\rightarrow+\infty$.
\end{enumerate}

Equivalently, $M$ admits the representation: $$M(t)=\int_{0}^{|t|}m(s)ds,$$ where $m: \mathbb{R}_{+}\rightarrow\mathbb{R}_{+}$ is non-decreasing, right continuous, with $m(0)=0$, $m(t)>0\ \forall t>0$ and $m(t)\rightarrow\infty$ as $t\rightarrow\infty$. The conjugate $N$-function of $M$ is defined by
$$\overline{M}(t)=\int_{0}^{|t|}\overline{m}(s)ds,$$ where  $\overline{m}: \mathbb{R}_{+}\rightarrow\mathbb{R}_{+}$ is given by
$\overline{m}(t)=\sup\{s:\ m(s)\leq t\}$.
Evidently we have

\begin{equation}\label{Young}
st\leq M(s)+\overline{M}(t),
\end{equation}

which is known as the Young inequality. Equality holds in \eqref{Young} if and only if either $t=m(s)$ or $s=\overline{m}(t)$.\\

In what follows, we say that an $N$-function $M$ verifies the $\triangle_{2}$ condition
\begin{equation}\label{delta2}
  M(2t)\leq K\ M(t),\ \forall t\geq0,
\end{equation}
for some constant $K>0$. This condition can be rewritten in the following way: For each $s>0$, there exists $K_{s}>0$ such that
\begin{equation}\label{deltas}
  M(st)\leq K_{s}\ M(t),\ \forall t\geq0.
\end{equation}

If $A$ and $B$ are two $N$-functions, we say that $A$ is stronger than $B$ if $$B(x)\leq A(ax),\ x\geq x_{0}\geq 0,$$ for each $a>0$ and $x_{0}$ (depending on $a$), $B\prec\prec A$ in symbols. This is the case if and only if for every positive constante $k$ $$\lim_{t\rightarrow+\infty}\frac{B(kt)}{A(t)}=0.$$
The Orlicz class $K^{M}(\Omega)$ (resp. the Orlicz space $L^{M}(\Omega)$) is defined as the set of (equivalence classes of) real-valued measurable
functions $u$ on $\Omega$ such that $$\rho(u;M)=\int_{\Omega}M(u(x))dx<\infty\ (\text{resp.}\ \int_{\Omega}M(\lambda u(x))dx<\infty\ \text{for some}\ \lambda>0).$$
$L^{M}(\Omega)$ is a Banach space under the Luxemburg norm
\begin{equation}\label{19}
\|u\|_{(M)}=\inf\bigg{\{}\lambda>0\ :\ \int_{\Omega}M(\frac{u}{\lambda})\leq1\bigg{\}},
\end{equation}
 whose norm is equivalent to the Orlicz norm $$\|u\|_{L^{M}(\Omega)}=\sup_{\rho(v;\overline{M})\leq1}\int_{\Omega}|u(x)||v(x)|dx.$$

The next lemma  and their proof can be found in \cite{Fukagai}.

 \begin{lemma}\label{lem2}
   Assume that $(m_{1})$ and $(m_{2})$ hold and let $\xi_{0}(t)=\min\{t^{l},t^{r}\}$, $\xi_{1}(t)=\max\{t^{l},t^{r}\}$, for all $t\geq0$. Then,
   $$\xi_{0}(\rho)M(t)\leq M(\rho t)\leq \xi_{1}(\rho)M(t)\ \text{for}\ \rho,t\geq0$$ and $$\xi_{0}(\|u\|_{(M)})\leq \int_{\mathbb{R}^{N}}M(|u|)dx\leq\xi_{1}(\|u\|_{(M)})\ \text{for}\ u\in L^{M}(\mathbb{R}^{N}).$$
 \end{lemma}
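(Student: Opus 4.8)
The plan is to reduce everything to the pointwise comparison and then obtain the modular--norm inequality as a corollary. The crucial observation is that, with $M(t)=\int_{0}^{|t|}m(s)s\,ds$, the fundamental theorem of calculus gives $M'(t)=m(t)t$ for $t>0$, so hypothesis $(m_{2})$ may be recast as a bound on the logarithmic derivative of $M$:
$$\frac{l}{t}\le \frac{M'(t)}{M(t)}\le\frac{r}{t},\qquad t>0.$$
Here $M(t)>0$ for $t>0$ by $(m_{1})$, so the quotient is well defined, and $M$ is $C^{1}$ since $m$ is.

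First I would establish the pointwise estimate $\xi_{0}(\rho)M(t)\le M(\rho t)\le\xi_{1}(\rho)M(t)$. The degenerate cases $t=0$, $\rho=0$, and $\rho=1$ are immediate, as both sides either vanish or coincide. For $\rho>1$ I would integrate the displayed inequality over $[t,\rho t]$: the middle term integrates to $\ln\!\big(M(\rho t)/M(t)\big)$, while the outer terms give $l\ln\rho$ and $r\ln\rho$, so that $\rho^{l}\le M(\rho t)/M(t)\le\rho^{r}$. For $0<\rho<1$ I would instead integrate over $[\rho t,t]$ and reciprocate, obtaining $\rho^{r}\le M(\rho t)/M(t)\le\rho^{l}$. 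In either regime, since $r>l$, the smaller of $\rho^{l},\rho^{r}$ is exactly $\xi_{0}(\rho)$ and the larger is $\xi_{1}(\rho)$, so both cases collapse to the asserted two-sided bound.

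With the pointwise estimate in hand, the modular inequality follows quickly. Taking $\rho=2$ shows $M(2t)\le 2^{r}M(t)$, so $M$ satisfies the $\triangle_{2}$ condition \eqref{delta2}; consequently, for $u\neq0$ the map $\lambda\mapsto\int_{\mathbb{R}^{N}}M(u/\lambda)\,dx$ is continuous and, at $\lambda=\|u\|_{(M)}$, equals $1$. I would then apply the pointwise bound at each point $x$ with the common scalar $\rho=\|u\|_{(M)}$ and argument $t=|u(x)|/\|u\|_{(M)}$, so that $\rho t=|u(x)|$, and integrate over $\mathbb{R}^{N}$:
$$\xi_{0}(\|u\|_{(M)})\int_{\mathbb{R}^{N}}M\!\Big(\tfrac{|u|}{\|u\|_{(M)}}\Big)dx\le\int_{\mathbb{R}^{N}}M(|u|)\,dx\le\xi_{1}(\|u\|_{(M)})\int_{\mathbb{R}^{N}}M\!\Big(\tfrac{|u|}{\|u\|_{(M)}}\Big)dx.$$
Since both outer integrals equal $1$, this is precisely the claimed estimate $\xi_{0}(\|u\|_{(M)})\le\int_{\mathbb{R}^{N}}M(|u|)\,dx\le\xi_{1}(\|u\|_{(M)})$.

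The two integrations are routine; the one point demanding care is the identity $\int_{\mathbb{R}^{N}}M(u/\|u\|_{(M)})\,dx=1$, whose justification relies on the $\triangle_{2}$ property (so that the Luxemburg infimum is attained) together with a monotone/dominated convergence argument to pass the limit through the integral. Everything else is bookkeeping of the exponents $l,r$ and of the $\min$/$\max$ defining $\xi_{0},\xi_{1}$.
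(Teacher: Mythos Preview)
Your proof is correct. The paper does not actually prove this lemma: immediately before the statement it says ``The next lemma and their proof can be found in \cite{Fukagai}.'' Your argument---recasting $(m_{2})$ as the two-sided bound $l/t\le M'(t)/M(t)\le r/t$ and integrating in $t$ to obtain the pointwise estimate, then scaling by $\rho=\|u\|_{(M)}$---is exactly the standard proof from that reference. One minor remark: you can bypass the identity $\int_{\mathbb{R}^{N}}M(|u|/\|u\|_{(M)})\,dx=1$ entirely by using $\le 1$ directly for the upper bound and $\int_{\mathbb{R}^{N}}M(|u|/(\|u\|_{(M)}-\epsilon))\,dx>1$ followed by $\epsilon\to 0$ for the lower bound; this is how the paper handles the analogous weighted estimate in Lemma~\ref{lem3}.
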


\begin{definition}
	Let $M$ be an $N$-function. For a given domain $\Omega$ in $\mathbb{R}^{N}$ and $0<s<1$, we define the fractional Orlicz-Sobolev space $W^{s,M}(\Omega)$ as follows,
	\begin{equation}\label{20}
	W^{s,M}(\Omega)=\bigg{\{}u\in L^{M}(\Omega):\ \exists\lambda>0/\ \int_{\Omega}\int_{\Omega} M\bigg{(}\frac{\lambda(u(x)-u(y))}{|x-y|^{s}M^{-1}(|x-y|^{N})}\bigg{)}dxdy<\infty\bigg{\}}.
	\end{equation}
	This space is equipped with the norm,
	\begin{equation}\label{21}
	\|u\|_{(s,M)}=\|u\|_{(M)}+[u]_{(s,M)},
	\end{equation}
	where $[.]_{(s,M)}$ is the Gagliardo semi-norm, defined by
	\begin{equation}\label{22}
	[u]_{(s,M)}=\inf\bigg{\{}\lambda>0:\ \int_{\Omega}\int_{\Omega} M\bigg{(}\frac{u(x)-u(y)}{\lambda|x-y|^{s}M^{-1}(|x-y|^{N})}\bigg{)}dxdy\leq1\bigg{\}}.
	\end{equation}
\end{definition}

Let $W^{s,M}_{0}(\Omega)$ denote the closure of $C_{c}^{\infty}(\Omega)$ in the norm $\|.\|_{(s,M)}$ defined in \eqref{21}.

\begin{thm}\label{thm2}[(Generalized Poincar\'{e} inequality)]
	Let $\Omega$ be a bounded open subset of $\mathbb{R}^{N}$ and let $s\in]0,1[$. Let $M$ be an $N$-function. Then there exists a positive
	constant $\mu$ such that, $$\|u\|_{(M)}\leq \mu [u]_{(s,M)},\ \ \forall\ u\in W^{s,M}_{0}(\Omega).$$
\end{thm}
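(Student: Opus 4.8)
The plan is to reduce to $C_c^\infty(\Omega)$ and then play $|u(x)|$ against a large region on which the zero-extension of $u$ vanishes. Since $W^{s,M}_0(\Omega)$ is the closure of $C_c^\infty(\Omega)$ for $\|\cdot\|_{(s,M)}$, and $u\mapsto\|u\|_{(M)}$ and $u\mapsto[u]_{(s,M)}$ are both $1$-Lipschitz for that norm, it suffices to produce a constant $\mu$, independent of $u$, with $\|u\|_{(M)}\le\mu[u]_{(s,M)}$ for every $u\in C_c^\infty(\Omega)$; the general case then follows by passing to the limit along an approximating sequence. So fix $u\in C_c^\infty(\Omega)$, extend it by $0$ outside $\Omega$, choose $x_0$ and $R>0$ with $\overline\Omega\subset B_{R/2}(x_0)$, and set $A:=B_R(x_0)\setminus B_{R/2}(x_0)$. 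Then $u\equiv0$ on $A$, we have $|A|=(1-2^{-N})\,|B_R(x_0)|>0$, and $|x-y|\le 3R/2$ for every $x\in\Omega$, $y\in A$ — this is exactly where boundedness of $\Omega$ enters.

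The constant will come from three elementary monotonicity facts about an $N$-function: $M$ is increasing on $[0,\infty)$; $M$ is convex with $M(0)=0$, so $M(t/\beta)\le M(t)/\beta$ whenever $\beta\ge1$; and $M^{-1}$ is increasing with $M^{-1}(0)=0$, so $t\mapsto t^sM^{-1}(t^N)$ is increasing on $(0,\infty)$. Put $c_0:=(3R/2)^sM^{-1}\big((3R/2)^N\big)>0$. For $x\in\Omega$, $y\in A$ we then have $|x-y|^sM^{-1}(|x-y|^N)\le c_0$, and since $u(y)=0$ and $M$ is increasing,
\[
M\!\left(\frac{u(x)}{\lambda c_0}\right)\ \le\ M\!\left(\frac{u(x)-u(y)}{\lambda\,|x-y|^{s}M^{-1}(|x-y|^{N})}\right)\qquad\text{for every }\lambda>0 .
\]
Pick $\lambda>[u]_{(s,M)}$, so that the Gagliardo modular of $u$ (read for the zero-extension, hence dominating its restriction to $\Omega\times A$) is $\le1$; integrating the last display over $x\in\Omega$ and $y\in A$ and discarding the nonnegative remainder of the integration domain yields $|A|\int_\Omega M\big(u(x)/\lambda c_0\big)\,dx\le1$.

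Finally set $\beta:=\max\{1,1/|A|\}$ and $\mu:=\beta c_0$. By convexity of $M$, $\int_\Omega M\big(u/(\beta c_0\lambda)\big)\le\beta^{-1}\int_\Omega M\big(u/(c_0\lambda)\big)\le\beta^{-1}|A|^{-1}\le1$, hence $\|u\|_{(M)}\le\beta c_0\lambda=\mu\lambda$ directly from the definition of the Luxemburg norm; letting $\lambda\downarrow[u]_{(s,M)}$ settles the smooth case and density finishes the proof. The argument is short, and (if one prefers) the convexity manipulations can be replaced throughout by Lemma~\ref{lem2}; the only place that needs an idea is the first step — recognising that the difference quotient in the modular becomes useful only after extending $u$ by zero and comparing $u(x)$ with a fixed region where $u$ vanishes. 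The one point to be careful about is that the modular on the right must be that of the zero-extension of $u$ (so that it controls the contribution of $\Omega\times A$); working with the $\Omega\times\Omega$ modular literally would instead require a fractional Hardy-type inequality, which is the real technical content behind the statement and the step I expect to be the main obstacle.
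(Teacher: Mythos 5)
The paper gives no proof of this theorem (it is imported from \cite{Azroul}), so your argument has to stand on its own, and it does not: the gap you flag in your last sentence is the whole theorem, not a loose end. Your key estimate integrates the difference quotient over $\Omega\times A$, where $A=B_R(x_0)\setminus B_{R/2}(x_0)$ is disjoint from $\overline{\Omega}$; but the modular defining $[u]_{(s,M)}$ in \eqref{22} is taken over $\Omega\times\Omega$ only, so choosing $\lambda>[u]_{(s,M)}$ gives no control whatsoever over $\int_\Omega\int_A M(\cdots)\,dy\,dx$ --- that region lies entirely outside the domain of the modular. What you have actually proved is a Poincar\'e inequality for the seminorm of the zero-extension computed over $\mathbb{R}^N\times\mathbb{R}^N$, a different and in general strictly larger quantity; passing from that back to the $\Omega\times\Omega$ seminorm is precisely the fractional Hardy-type inequality you defer, and it is the only nontrivial content of the statement. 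The parts you do carry out (reduction to $C_c^\infty(\Omega)$ via $1$-Lipschitzness of both $\|\cdot\|_{(M)}$ and $[\cdot]_{(s,M)}$, monotonicity of $t\mapsto t^sM^{-1}(t^N)$, the rescaling $M(t/\beta)\le M(t)/\beta$ for $\beta\ge 1$) are all correct, but they prove a different theorem.

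Be aware also that the gap cannot be closed by adjusting constants. For $M(t)=|t|^p$ the seminorm \eqref{22} reduces to the classical Gagliardo seminorm with exponent $N+sp$, and for $sp<1$ (e.g.\ $p=2$, $s<1/2$) it is classical that $C_c^\infty(\Omega)$ is dense in $W^{s,p}(\Omega)$ for the $\Omega\times\Omega$-based norm, so constant functions lie in $W^{s,M}_0(\Omega)$, have $[u]_{(s,M)}=0$, and have nonzero $\|u\|_{(M)}$. So with definition \eqref{22} read literally the inequality is false in that regime, and your annulus argument is not merely incomplete but is attacking a statement whose truth requires either the $\mathbb{R}^N\times\mathbb{R}^N$ convention for the seminorm (under which your proof would be essentially complete) or additional hypotheses on $s$ and $M$. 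You correctly identified where the difficulty sits; the missing idea is that no purely pointwise comparison against a region where $u$ vanishes can work when that region is excluded from the modular.
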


Therefore, if $\Omega$ is bounded and $M$ be an $N$-function, then $[u]_{(s,M)}$ is a norm of $ W^{s,M}_{0}(\Omega)$ equivalent to $\|u\|_{(s,M)}$.\\

Let $M$ be a given $N$-function, satisfying the following conditions:

\begin{equation}\label{4}
\int_{0}^{1}\frac{M^{-1}(\tau)}{\tau^{\frac{N+s}{N}}}d\tau<\infty
\end{equation}
and
\begin{equation}\label{5}
\int_{1}^{+\infty}\frac{M^{-1}(\tau)}{\tau^{\frac{N+s}{N}}}d\tau=\infty.
\end{equation}

If \eqref{5} is satisfied, we define the inverse Sobolev conjugate $N$-function of $M$ as follows,

\begin{equation}\label{6}
M_{*}^{-1}(t)=\int_{0}^{t}\frac{M^{-1}(\tau)}{\tau^{\frac{N+s}{N}}}d\tau.
\end{equation}

\begin{thm}\label{inj}
	Let $M$ be an $N$-function and $s\in]0,1[$. Let $\Omega$ be a bounded open subset of $\mathbb{R}^{N}$ with $C^{0,1}$-regularity and bounded boundary. If \eqref{4} and \eqref{5} hold, then
	\begin{equation}\label{7}
	W^{s,M}(\Omega)\hookrightarrow L^{M_{*}}(\Omega).
	\end{equation}
	Moreover,
	\begin{equation}\label{8}
	W^{s,M}(\Omega)\hookrightarrow L^{B}(\Omega)
	\end{equation}
	is compact for all $B\prec\prec M_{*}$.
\end{thm}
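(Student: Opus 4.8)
The plan is to transplant the classical Donaldson--Trudinger--Adams route to Orlicz--Sobolev embeddings into the nonlocal setting, establishing the continuous embedding \eqref{7} first and then deducing the compact embedding \eqref{8} from it by a Rellich--Kondrachov argument. The decisive ingredient for \eqref{7} is an Orlicz analogue of the Hardy--Littlewood--Sobolev (fractional integration) inequality attached to the conjugate function $M_{*}$: the weight $\tau\mapsto M^{-1}(\tau)\,\tau^{-(N+s)/N}$ and the convergence of the integral \eqref{4} are precisely what make the relevant fractional potential bounded into $L^{M_{*}}(\Omega)$, and the primitive relation \eqref{6} is exactly what lets the Young inequality \eqref{Young} balance that kernel against a difference quotient scale by scale. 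Since $\partial\Omega$ is Lipschitz, one may either pass through a bounded extension operator $W^{s,M}(\Omega)\to W^{s,M}(\mathbb{R}^{N})$ (built by flattening and reflecting the boundary, the $\Delta_{2}$-flexibility of an $N$-function making the estimates close) or argue directly on $\Omega$, using only the measure-density inequality $|B_{r}(x)\cap\Omega|\ge c\,|B_{r}(x)|$ supplied by Lipschitz regularity near the boundary.

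\emph{Continuous embedding.} For (the extension of) $u$ and a.e.\ $x\in\Omega$, write $u(x)-\langle u\rangle_{B_{r}(x)}$ as the mean over $y\in B_{r}(x)$ of the differences $u(x)-u(y)$ and insert the Gagliardo weight $|x-y|^{s}M^{-1}(|x-y|^{N})$; this yields a pointwise bound $|u(x)|\le|\langle u\rangle_{B_{r}(x)}|+C\,\Phi_{r}(x)$, where $\Phi_{r}$ is a fractional-potential expression in $\dfrac{u(x)-u(y)}{|x-y|^{s}M^{-1}(|x-y|^{N})}$ against a kernel that is integrable over $B_{r}(x)$ precisely because of \eqref{4}. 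Estimating $|\langle u\rangle_{B_{r}(x)}|$ by $\|u\|_{(M)}$ via H\"older's inequality in Orlicz spaces and Lemma \ref{lem2}, optimizing over $r>0$ in Hedberg's manner, and then feeding the resulting pointwise inequality into $M_{*}$ and integrating over $\Omega$ — here \eqref{6} and the Young inequality enter — one reaches $\int_{\Omega}M_{*}(u/\lambda)\,dx\le 1$ for some $\lambda\simeq\|u\|_{(s,M)}$, that is, $\|u\|_{(M_{*})}\le C\,\|u\|_{(s,M)}$, which is \eqref{7}.

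\emph{Compact embedding.} Let $(u_{n})$ be bounded in $W^{s,M}(\Omega)$. By \eqref{7} it is bounded in $L^{M_{*}}(\Omega)$, hence (since $|\Omega|<\infty$) in $L^{1}(\Omega)$, with uniformly absolutely continuous norms there. The heart of the matter is that bounded subsets of $W^{s,M}(\Omega)$ are precompact in $L^{1}(\Omega)$: starting from \eqref{22} and using Jensen's inequality I would prove a Poincar\'e-type bound $\|\,u-\langle u\rangle_{B_{\delta}(\cdot)}\,\|_{L^{1}(\Omega')}\le\varepsilon(\delta)\,[u]_{(s,M)}$ for $\Omega'\Subset\Omega$ and small $\delta$, with $\varepsilon(\delta)\to0$ (the rate governed by the size of $\delta^{s}M^{-1}(\delta^{N})$ and by \eqref{4}); thus $(u_{n})$ stays uniformly close to the bounded equicontinuous families $\{\langle u_{n}\rangle_{B_{\delta}(\cdot)}\}$, and Arzel\`a--Ascoli together with the Kolmogorov--M.\ Riesz compactness criterion produces a subsequence converging in $L^{1}(\Omega)$ — the layer near $\partial\Omega$ being absorbed by the uniform $L^{M_{*}}$-bound — and, after a further subsequence, a.e.\ in $\Omega$. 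Finally, for any $B\prec\prec M_{*}$, convexity of $B$ gives for every $\eta>0$ a constant $C_{\eta}$ with $B(t)\le\eta\,M_{*}(t)+C_{\eta}\,t$ for all $t\ge0$; applying this to $|u_{n}-u_{m}|$ and using the uniform $L^{M_{*}}$-bound together with the $L^{1}$-Cauchy property shows $(u_{n})$ is Cauchy in $L^{B}(\Omega)$, which proves \eqref{8}.

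The main obstacle is the continuous embedding, and within it the Orlicz--fractional Hardy--Littlewood--Sobolev estimate underlying $\Phi_{r}$: one must split the potential kernel into a near and a far part and apply the Young inequality at the scale dictated by $M_{*}^{-1}$, so that the conjugate $N$-function thereby produced is controlled by $M_{*}$ — it is here, and only here, that \eqref{4} is genuinely used and the precise form \eqref{6} of $M_{*}^{-1}$ is forced. A secondary, more technical point is the construction (or circumvention, by localization and cut-offs) of the extension operator $W^{s,M}(\Omega)\to W^{s,M}(\mathbb{R}^{N})$ for a general $N$-function $M$.
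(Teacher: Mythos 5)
First, a point of reference: the paper contains no proof of Theorem \ref{inj} at all --- it is recalled in the preliminaries as a result imported from \cite{Azroul} --- so your argument can only be judged on its own terms, not against an in-paper proof. Your overall architecture (continuous embedding via an Orlicz--Hedberg potential estimate, then compactness via precompactness in $L^{1}$ combined with the interpolation $B(t)\le\eta M_{*}(t)+C_{\eta}t$ furnished by $B\prec\prec M_{*}$) is the standard Donaldson--Trudinger / Di Nezza--Palatucci--Valdinoci route, and your final step is correct up to a small repair: since no $\Delta_{2}$ condition on $M_{*}$ is assumed, the uniform bound on the modular $\int_{\Omega}M_{*}(|u_{n}-u_{m}|/\epsilon)\,dx$ cannot be read off from the norm bound; one must instead invoke $B\prec\prec M_{*}$ with the constant $k=2C/\epsilon$ (which the definition of $\prec\prec$ permits, as it holds for every $k$) and apply the resulting inequality to $|u_{n}-u_{m}|/(2C)$.

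The genuine gaps are in the two load-bearing estimates, which are named but not carried out. (1) The ``Orlicz fractional Hardy--Littlewood--Sobolev'' inequality and the Hedberg optimization \emph{are} the content of \eqref{7}: the step ``feeding the resulting pointwise inequality into $M_{*}$ and integrating'' is precisely where the proof lives, because $M_{*}$ applied to a sum is not controlled by the sum of the $M_{*}$'s, no $\Delta_{2}$ condition is available in Theorem \ref{inj}, and the scale-by-scale bookkeeping of the Young inequality \eqref{Young} against the kernel defined by \eqref{6} has to be performed explicitly; you flag this yourself as the main obstacle, so the gap is acknowledged rather than hidden, but what remains is the theorem. (2) In the compactness step, the claimed rate $\varepsilon(\delta)\simeq\delta^{s}M^{-1}(\delta^{N})$ for $\|u-\langle u\rangle_{B_{\delta}(\cdot)}\|_{L^{1}(\Omega')}$ does not follow from Jensen's inequality applied to \eqref{22} as you indicate: averaging over the balls $B_{\delta}(x)$ introduces the normalization $|B_{\delta}|^{-1}$ inside the modular, so what one actually obtains is a bound of order $\delta^{s}M^{-1}(\delta^{N})\,M^{-1}(c\,\delta^{-N})\,[u]_{(s,M)}$. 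This tends to $0$ when $M(t)=t^{p}$ (the two $M^{-1}$ factors cancel), but not for general $M$ satisfying \eqref{4}--\eqref{5}: for instance, for an $N$-function behaving like $t^{2}$ near $0$ and like $t\log(1+t)$ at infinity, the product is of order $\delta^{\,s-N/2}/\log(1/\delta)$, which blows up for all $s\in\,]0,1[$ when $N\ge 2$, even though \eqref{4} holds. Eliminating the unabsorbed factor requires either growth hypotheses of the type $(m_{2})$ --- which Theorem \ref{inj} does not assume --- or a finer localized argument than the moving average; as written, this step of the compactness proof would fail. With the continuous embedding actually proved and the $L^{1}$-precompactness estimate reworked, the rest of your plan goes through.
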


The fractional $M$-Laplacian operator is defined as
\begin{equation}\label{17}
(-\triangle)^{s}_{m}u(x)=2 P.V\int_{\mathbb{R}^{N}} m\bigg{(}\frac{u(x)-u(y)}{|x-y|^{s}M^{-1}(|x-y|^{N})}\bigg{)}\frac{u(x)-u(y)}{|u(x)-u(y)|}\frac{dy}{|x-y|^{s}M^{-1}(|x-y|^{N})},
\end{equation}
where $P.V$ is the principal value.\\
This operator is well defined between $W^{s,M}(\mathbb{R}^{N})$ and its dual space  $W^{-s,\overline{M}}(\mathbb{R}^{N})$. In fact, in [\cite{Azroul}, lemma 3.5] the following representation formula is provided
\begin{equation}\label{18}
\langle(-\triangle)^{s}_{m}u,v\rangle=\int_{\mathbb{R}^{N}}\int_{\mathbb{R}^{N}} m\bigg{(}\frac{u(x)-u(y)}{|x-y|^{s}M^{-1}(|x-y|^{N})}\bigg{)}\frac{u(x)-u(y)}{|u(x)-u(y)|}\frac{v(x)-v(y)}{|x-y|^{s}M^{-1}(|x-y|^{N})}dxdy,
\end{equation}
for all $v\in W^{s,M}(\mathbb{R}^{N})$.

\section{Variational setting and some useful tools}

In this section, we will first introduce the variational setting for problem \eqref{eq}. In view of the presence of potential $V(x)$, our working space is $$E=\bigg{\{}u\in W^{s,M}(\mathbb{R}^{N});\ \int_{\mathbb{R}^{N}}V(x)M(u)dx<\infty\bigg{\}},$$
equipped with the following norm $$\|u\|= [u]_{(s,M)}+\|u\|_{(V,M)}$$ where $$\|u\|_{(V,M)}=\inf\bigg{\{}\lambda>0;\ \int_{\mathbb{R}^{N}}V(x)M\bigg{(}\frac{u}{\lambda}\bigg{)}dx\leq1\bigg{\}}.$$
We define the functional $G: E\rightarrow \mathbb{R}$ by
\begin{equation}\label{F}
G(u)=\int_{\mathbb{R}^{N}}\int_{\mathbb{R}^{N}}M(h_{u}(x,y))dxdy,
\end{equation}
where $h_{u}(x,y)=\displaystyle\frac{u(x)-u(y)}{|x-y|^{s}M^{-1}(|x-y|^{N})}.$\\

After integrating, we obtain from $(f_{1})$ that for any $(x,t)\in \mathbb{R}^{N}\times\mathbb{R}$
\begin{equation}\label{F}
  F(x,t)=\int_{0}^{t}f(x,s)ds=\xi(x)|t|^{p}.
\end{equation}

 In order to prove Theorem \ref{thm1}, we will consider the following family of functionals
$$I_{\lambda}(u)=G(u)+\Psi(u)-\lambda B(u)$$
with $\lambda\in[1,2]$, $u\in E$ and
 $$\Psi(u)=\int_{\mathbb{R}^{N}}V(x)M(u)dx,\ \ \ \ B(u)=\int_{\mathbb{R}^{N}}F(x,u)dx.$$
We will show that $I_{\lambda}$ satisfies the assumptions of the following variant of fountain Theorem due to Zou \cite{Zou}.

\begin{thm}\label{Fountain}
  Let $(E,\|\|)$ be a Banach space and $E=\overline{\bigoplus_{j\in\mathbb{N}}X_{j}}$ with $dim\ X_{j}<\infty$ for any $j\in\mathbb{N}$. Set $Y_{k}=\bigoplus_{j=1}^{k}X_{j}$ and $Z_{k}=\bigoplus_{j=k}^{\infty}X_{j}$. Consider the following $C^{1}$-functional $I_{\lambda}:\ E\rightarrow\mathbb{R}$ defined by $$I_{\lambda}(u)=A(u)-\lambda B(u),\ \lambda\in[1,2].$$
  Assume that $I_{\lambda}$ satisfies the following assumptions:

  $(i)$ $I_{\lambda}$ maps bounded sets to bounded sets for $\lambda\in[1,2]$ and $I_{\lambda}(-u)=I_{\lambda}(u)$ for all $(\lambda,u)\in [1,2]\times E$.

  $(ii)$ $B(u)\geq0$, $B(u)\rightarrow+\infty$ as $\|u\|\rightarrow+\infty$ on any finite dimensional subspace of $E$.

  $(iii)$ There exists $r_{k}>\rho_{k}$ such that
  $$a_{k}(\lambda):=\inf_{u\in Z_{k},\|u\|=\rho_{k}}I_{\lambda}(u)\geq 0>b_{k}(\lambda):=\max_{u\in Y_{k},\|u\|=r_{k}}I_{\lambda}(u),\ \forall\lambda\in[1,2],$$ and $$d_{k}(\lambda):=\inf_{u\in Z_{k},\|u\|\leq\rho_{k}}I_{\lambda}(u)\rightarrow0\ \text{as}\ k\rightarrow\infty\ \text{uniformly for}\ \lambda\in[1,2].$$

  Then there exist $\lambda_{n}\rightarrow1$, $u_{\lambda_{n}}\in Y_{n}$ such that
  $$I_{\lambda_{n}}^{'}|_{Y_{n}}(u_{\lambda_{n}})=0,\ I_{\lambda_{n}}(u_{\lambda_{n}})\rightarrow c_{k}\in [d_{k}(2),b_{k}(1)]\ \text{as}\ n\rightarrow\infty.$$
  Particularly, if $(u_{\lambda_{n}})$ has a convergent subsequence for every $k$, then $I_{1}$ has infinitely many nontrivial critical points $\{u_{k}\}\in E\backslash\{0\}$ satisfying $I_{1}(u_{k})\rightarrow0^{-}$ as $k\rightarrow\infty$.
\end{thm}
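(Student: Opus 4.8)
The plan is to prove this parametrized (dual) fountain theorem by combining a symmetric minimax scheme carried out on each finite-dimensional subspace $Y_n$ with the monotonicity trick, and then diagonalizing. The evenness in $(i)$ lets me use a Borsuk--Ulam/genus intersection to manufacture minimax levels, the geometry $(iii)$ pins those levels into the interval $[d_k(\lambda),b_k(\lambda)]\subset(-\infty,0)$, and the monotonicity coming from $B\ge 0$ is what will upgrade abstract minimax values into genuinely bounded critical sequences for almost every $\lambda$.

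First I would fix $k$ and, for each $n\ge k$ and each $\lambda\in[1,2]$, work with the restriction $I_\lambda|_{Y_n}$, an even $C^1$ functional on a finite-dimensional space. Using the linking between the sphere $\{u\in Y_k:\|u\|=r_k\}$, on which $I_\lambda\le b_k(\lambda)<0$, and the sphere $\{u\in Z_k:\|u\|=\rho_k\}$, on which $I_\lambda\ge a_k(\lambda)\ge 0$, together with the intersection property guaranteed by Borsuk--Ulam for odd maps, I would define a minimax value $c_k^n(\lambda)$ over a class of odd deformations of a ball in $Y_k$. Assumption $(iii)$ then squeezes it as $d_k(\lambda)\le c_k^n(\lambda)\le b_k(\lambda)$. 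Because $Y_n$ is finite dimensional and $(ii)$ controls the behaviour of $I_\lambda|_{Y_n}$ at infinity (so that the relevant sublevel sets are compact and the deformation lemma applies), this level is attained at a critical point $u_n=u_n(\lambda)$ of the restriction, i.e. $I_\lambda'|_{Y_n}(u_n)=0$ and $I_\lambda(u_n)=c_k^n(\lambda)$.

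Next I would exploit monotonicity. Since $B(u)\ge 0$, the map $\lambda\mapsto I_\lambda(u)$ is non-increasing for every fixed $u$, hence so is $\lambda\mapsto c_k(\lambda):=\limsup_{n}c_k^n(\lambda)$ on $[1,2]$; a monotone function is differentiable almost everywhere. At each point $\lambda$ of differentiability I would run the monotonicity trick: comparing near-optimal configurations for parameters $\lambda'$ slightly below $\lambda$, the boundedness of the difference quotients $(c_k(\lambda')-c_k(\lambda))/(\lambda-\lambda')$ bounds $B(u_n)$ along these configurations, and with $(i)$--$(ii)$ this yields a bound $\sup_n\|u_n(\lambda)\|<\infty$ independent of $n$. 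This is the step I expect to be the main obstacle: turning mere almost-everywhere differentiability of the level function into a uniform norm bound on the finite-dimensional critical points is the whole content of the variant, everything else being a symmetric minimax performed one space at a time. The outcome is that for almost every $\lambda\in[1,2]$ there is a bounded sequence $u_n(\lambda)\in Y_n$ with $I_\lambda'|_{Y_n}(u_n(\lambda))=0$ and $I_\lambda(u_n(\lambda))\to c_k(\lambda)\in[d_k(\lambda),b_k(\lambda)]$.

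Finally I would pass to the parameter limit. Choosing $\lambda_n\to 1^{+}$ inside the full-measure set of good parameters and using monotonicity of $\lambda\mapsto c_k(\lambda)$, $d_k$ and $b_k$ together with the squeeze $d_k(2)\le c_k(\lambda_n)\le b_k(1)$, I would select $u_{\lambda_n}\in Y_n$ with $I_{\lambda_n}'|_{Y_n}(u_{\lambda_n})=0$ and $I_{\lambda_n}(u_{\lambda_n})\to c_k\in[d_k(2),b_k(1)]$, which is the first assertion. For the last assertion I invoke the hypothesis that $(u_{\lambda_n})_n$ has a convergent subsequence for each fixed $k$: writing $u_{\lambda_n}\to u_k$ and using that $I_\lambda$ maps bounded sets to bounded sets from $(i)$, I pass to the limit in $I_{\lambda_n}'|_{Y_n}(u_{\lambda_n})=0$ as $\lambda_n\to 1$ to obtain $I_1'(u_k)=0$ with $I_1(u_k)=c_k$. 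Since $b_k(1)<0$, each $u_k$ is nontrivial, and since $d_k(\lambda)\to 0$ uniformly in $\lambda$ while $c_k\le b_k(1)<0$, the levels satisfy $c_k\to 0^{-}$; hence $I_1$ has infinitely many nontrivial critical points, completing the proof.
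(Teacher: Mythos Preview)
The paper does not prove this theorem; it is quoted verbatim from Zou \cite{Zou} as a black-box tool (``the following variant of fountain Theorem due to Zou''), so there is no in-paper proof to compare your proposal against.

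That said, your sketch captures the correct architecture of Zou's original argument: the restriction to the finite-dimensional spaces $Y_n$, the symmetric minimax construction exploiting the Borsuk--Ulam intersection between the $Y_k$-sphere of radius $r_k$ and the $Z_k$-sphere of radius $\rho_k$, and the monotonicity trick in $\lambda$ (using $B\ge 0$) to extract bounded critical sequences for almost every parameter. You correctly identify the delicate step --- turning a.e.\ differentiability of $\lambda\mapsto c_k(\lambda)$ into a uniform bound on $\|u_n(\lambda)\|$ --- as the heart of the matter. One point you should tighten: in the final passage to the limit you write that you ``pass to the limit in $I_{\lambda_n}'|_{Y_n}(u_{\lambda_n})=0$ to obtain $I_1'(u_k)=0$''. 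This requires more than $(i)$; you need that the projections onto $Y_n$ exhaust $E$ and that $I_\lambda'$ is continuous, so that $I_{\lambda_n}'|_{Y_n}(u_{\lambda_n})=0$ plus $u_{\lambda_n}\to u_k$ and $\lambda_n\to 1$ really yield $\langle I_1'(u_k),v\rangle=0$ for every $v\in\bigcup_n Y_n$, hence for every $v\in E$ by density. Otherwise the outline is sound and matches the published proof.
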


Now we give the definition of weak solution for the problem \eqref{eq}.
 We define the functional $I_{1}$ on $E$ by $$I_{1}(u)=A(u)-B(u),$$ where
  \begin{align*}
  A(u)&=G(u)+\Psi(u).
  \end{align*}
\begin{definition}
  We say that $u\in E$ is a weak solution to $\eqref{eq}$ if $u$ satisfies $$\langle I_{1}^{'}(u),v\rangle=\langle A^{'}(u),v\rangle-\langle B^{'}(u),v\rangle$$ for all $v\in E$.
\end{definition}

  The functional $I_{\lambda}$ is well defined on $E$ moreover $I_{\lambda}\in C^{1}(E,\mathbb{R})$ and

\begin{equation}\label{I'}
  \langle I_{\lambda}^{'}(u),v\rangle=\langle A^{'}(u),v\rangle-\lambda\langle B^{'}(u),v\rangle\ \forall v\in E.
\end{equation}

Then the critical points of $I_{1}$ are weak solutions to \eqref{eq}.\\

Now, we introduce some important inequalities that show that the functional $I_{\lambda}$ satisfies the hypothesis of Theorem \ref{Fountain}.

\begin{lemma}\label{lem3} we assume that $(m_{1})$, $(m_{2})$ and $(V_{1})$ are satisfied. Then,
  the following properties hold true:\\

    \noindent$(i)$ $\xi_{0}([u]_{(s,M)})\leq G(u)\leq\xi_{1}([u]_{(s,M)})\ \forall u\in E,$\\

    \noindent$(ii)$ $\xi_{0}(\|u\|_{(V,M)})\leq \displaystyle\int_{\mathbb{R}^{N}}V(x)M(u)dx\leq \xi_{1}(\|u\|_{(V,M)})\ \forall u\in E,$
\end{lemma}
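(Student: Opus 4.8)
The plan is to derive both chains of inequalities from the scaling estimate in Lemma~\ref{lem2}, namely $\xi_{0}(\rho)M(t)\le M(\rho t)\le \xi_{1}(\rho)M(t)$, applied to suitably normalized integrands, together with the definitions of the two (semi)norms $[\cdot]_{(s,M)}$ and $\|\cdot\|_{(V,M)}$ as Luxemburg-type infima. The key point to exploit is that, under $(m_1)$--$(m_2)$, the functions $\xi_{0}$ and $\xi_{1}$ are continuous and strictly increasing on $[0,\infty)$ with $\xi_{0}(1)=\xi_{1}(1)=1$, so that the normalizing constant in each infimum is actually attained (the relevant integral is a continuous function of $\lambda$ that decreases from $+\infty$ to $0$), which lets me turn the ``$\le 1$'' constraint in the definitions into an exact equality at the optimal $\lambda$.

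For part $(i)$: write $G(u)=\int_{\mathbb R^N}\int_{\mathbb R^N}M(h_u(x,y))\,dx\,dy$ and set $\lambda=[u]_{(s,M)}$. By definition of the Gagliardo semi-norm and the continuity/monotonicity just mentioned, $\int\int M\big(h_u(x,y)/\lambda\big)\,dx\,dy=1$. Now apply Lemma~\ref{lem2} pointwise with $\rho=\lambda=[u]_{(s,M)}$ and $t=h_u(x,y)/\lambda$: this gives $\xi_0(\lambda)M(h_u/\lambda)\le M(h_u)\le \xi_1(\lambda)M(h_u/\lambda)$ for a.e.\ $(x,y)$. Integrating over $\mathbb R^N\times\mathbb R^N$ and using that the middle integral is $1$ yields exactly $\xi_0([u]_{(s,M)})\le G(u)\le \xi_1([u]_{(s,M)})$. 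One should separately dispose of the trivial case $u\equiv 0$ (both sides are $0$) and note that $u\in E$ guarantees $[u]_{(s,M)}<\infty$ so all quantities are finite.

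For part $(ii)$: this is formally identical with $M(h_u(x,y))$ replaced by $V(x)M(u(x))$ and $[u]_{(s,M)}$ replaced by $\|u\|_{(V,M)}$. Set $\lambda=\|u\|_{(V,M)}$, so that $\int_{\mathbb R^N}V(x)M(u/\lambda)\,dx=1$; apply Lemma~\ref{lem2} with $\rho=\lambda$ and $t=u(x)/\lambda$ inside the integral (the factor $V(x)\ge V_0>0$ by $(V_1)$ is just a nonnegative weight and passes through the pointwise inequalities unchanged); integrate. The role of $(V_1)$ here is twofold: it ensures $V(x)M(u/\lambda)$ is a genuine weight so the infimum defining $\|\cdot\|_{(V,M)}$ behaves like a Luxemburg norm, and it guarantees the attainment of the constraint as an equality.

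The only real subtlety — and the step I would be most careful about — is justifying that the infimum in the definition of each (semi)norm is attained, i.e.\ that $\varphi(\lambda):=\int\int M(h_u/\lambda)$ (resp.\ $\psi(\lambda):=\int V(x)M(u/\lambda)$) is continuous in $\lambda\in(0,\infty)$, strictly decreasing, with limits $+\infty$ as $\lambda\to 0^+$ and $0$ as $\lambda\to\infty$, so that there is a unique $\lambda^\star$ with value exactly $1$ and $\lambda^\star=[u]_{(s,M)}$ (resp.\ $=\|u\|_{(V,M)}$). Continuity and monotonicity follow from monotone/dominated convergence using the bounds $M(h_u/\lambda)\le \xi_1(1/\lambda)M(h_u)$ from Lemma~\ref{lem2} for $\lambda\le 1$ and $M(h_u/\lambda)\le M(h_u)$ for $\lambda\ge 1$; finiteness of $\varphi$ at some $\lambda$ is exactly the membership $u\in W^{s,M}$, and finiteness of $\psi$ at some $\lambda$ is the defining condition of $E$. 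Once attainment is in hand, the rest is the routine pointwise application of Lemma~\ref{lem2} and integration described above.
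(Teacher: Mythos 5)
Your argument is correct and rests on the same mechanism as the paper's proof: the two-sided scaling inequality $\xi_{0}(\rho)M(t)\le M(\rho t)\le\xi_{1}(\rho)M(t)$ from Lemma~\ref{lem2} combined with the Luxemburg-type definition of the (semi)norms (for part $(i)$ the paper simply cites the integral form of Lemma~\ref{lem2} applied to $h_{u}\in L^{M}(\mathbb{R}^{2N})$, which is exactly what your attainment argument re-derives). The only cosmetic difference is that you normalize via exact attainment of the infimum (legitimate here, since $(m_{2})$ gives the $\triangle_{2}$ condition), whereas the paper uses the one-sided constraints $\int V M(u/\|u\|_{(V,M)})\le 1$ and $\int V M(u/(\|u\|_{(V,M)}-\epsilon))\ge 1$ followed by $\epsilon\to 0$.
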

\begin{proof}
  $(i)$ By Lemma \ref{lem2}, we Know that $$\xi_{0}(\|u\|_{(M)})\leq \int_{\mathbb{R}^{N}}M(u)dx \leq\xi_{1}(\|u\|_{(M)})\ \forall u\in L^{M}(\mathbb{R}^{N}).$$ It follows that $$\xi_{0}(\|h_{u}\|_{(M,\mathbb{R}^{2N})})\leq G(u)\leq\xi_{1}(\|u\|_{(M,\mathbb{R}^{2N})})\ \forall u\in L^{M}(\mathbb{R}^{N}).$$
  Having in mind that, $\|h_{u}\|_{L^{M}(\mathbb{R}^{2N})}=[u]_{(s,M)},$ we obtain
   $$\xi_{0}([u]_{(s,M)})\leq G(u)\leq\xi_{1}([u]_{(s,M)})\ \forall u\in E,$$
   $(ii)$ Using Lemma \ref{lem2} and  Choosing $\rho=\|u\|_{(V,M)}$, we have
   $$
    M( u)\leq \xi_{1}(\|u\|_{(V,M)})M\bigg{(}\frac{u}{\|u\|_{(V,M)}}\bigg{)},$$
   then, $$
   V(t)M( u)\leq \xi_{1}(\|u\|_{(V,M)})V(t)M\bigg{(}\frac{u}{\|u\|_{(V,M)}}\bigg{)}\ \text{for}\ t\in\mathbb{R}^{N}.$$
   From the definition of the norm \eqref{19}, we obtain,
   $$\int_{\mathbb{R}^{N}}V(t)M( u)dt\leq \xi_{1}(\|u\|_{(V,M)})\int_{\mathbb{R}^{N}}V(t)M\bigg{(}\frac{u}{\|u\|_{(V,M)}}\bigg{)}dt\leq \xi_{1}(\|u\|_{(V,M)}).$$
   Using the similar reasoning with $\rho=\|u\|_{(V,M)}-\epsilon$ and $\epsilon>0$, we get
   $$\xi_{0}(\|u\|_{(V,M)}-\epsilon)V(t)M\bigg{(}\frac{u}{\|u\|_{(V,M)}-\epsilon}\bigg{)}\leq V(t)M( u),$$
   then $$\int_{\mathbb{R}^{N}}V(t)M( u)dt\geq \xi_{0}(\|u\|_{(V,M)}-\epsilon)\int_{\mathbb{R}^{N}}V(t)M\bigg{(}\frac{u}{\|u\|_{(V,M)}-\epsilon}\bigg{)}dt\geq\xi_{0}(\|u\|_{(V,M)}-\epsilon).$$
   Letting $\epsilon\rightarrow0$ in the above inequality, we obtain $$\xi_{0}(\|u\|_{(V,M)})\leq\int_{\mathbb{R}^{N}}V(t)M( u)dt.$$
  The proof of Lemma \ref{lem3} is complete.
\end{proof}
Now we show that the following compactness result holds.
\begin{lemma}\label{lem1}
 We suppose that $(m_{1})$ and $(m_{2})$ are satisfied. Let $\Phi$ be an $N$-function satisfying the $\triangle_{2}$ condition, $\Phi\prec\prec M_{*}$ and
 \begin{equation}\label{Mphi}
 \lim_{|t|\rightarrow+\infty}\frac{\Phi(t)}{M(t)}=0.
 \end{equation}
 Under the assumption $(V_{1})$ and $(V_{2})$, the embedding from $E$ into $L^{\Phi}(\mathbb{R}^{N})$ is compact.
\end{lemma}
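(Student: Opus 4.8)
The strategy is the standard concentration-compactness-style argument for Schr\"odinger operators with coercive potential, adapted to the Orlicz setting. Let $(u_n)$ be a bounded sequence in $E$; I must show it has a subsequence converging strongly in $L^{\Phi}(\mathbb{R}^N)$. The plan is as follows. First I would use the boundedness of $(u_n)$ together with Lemma~\ref{lem3} to deduce that $G(u_n)$, $\Psi(u_n)=\int_{\mathbb{R}^N}V(x)M(u_n)\,dx$, and hence $[u_n]_{(s,M)}$ and $\|u_n\|_{(V,M)}$ are bounded; by $(V_1)$ this also bounds $\|u_n\|_{(M)}$ and therefore $\|u_n\|_{(s,M)}$, so $(u_n)$ is bounded in $W^{s,M}(\mathbb{R}^N)$ and, up to a subsequence, $u_n\rightharpoonup u$ weakly with $u\in E$. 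Replacing $u_n$ by $u_n-u$ I may assume $u_n\rightharpoonup 0$.

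The next step is the local part: on any fixed ball $B_R=B_R(0)$, restrict to $W^{s,M}(B_R)$. Since $\Phi\prec\prec M_*$ and $B_R$ is a bounded Lipschitz domain, Theorem~\ref{inj} gives the compact embedding $W^{s,M}(B_R)\hookrightarrow\hookrightarrow L^{\Phi}(B_R)$, so $u_n\to 0$ strongly in $L^{\Phi}(B_R)$, i.e. $\int_{B_R}\Phi(u_n)\,dx\to 0$ for every fixed $R$ (using that $\Phi$ satisfies $\triangle_2$ so that norm convergence and modular convergence agree). It remains to control the tail $\int_{\mathbb{R}^N\setminus B_R}\Phi(u_n)\,dx$ uniformly in $n$ as $R\to\infty$. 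This is where $(V_2)$ enters. The idea is to split $\mathbb{R}^N\setminus B_R = A_n \cup A_n^c$ where $A_n=\{x\notin B_R: V(x)\le L\}$ and its complement (large-potential set), for a parameter $L$ to be sent to $\infty$. On the large-potential set one has $\int_{\{V\ge L\}\setminus B_R}\Phi(u_n)\,dx$; using \eqref{Mphi} one estimates $\Phi(u_n)\le \delta\, M(u_n)$ outside a bounded region of $t$-values, so this piece is bounded by $\frac{\delta}{L}\int V(x)M(u_n)\,dx + (\text{bounded-}t\text{ contribution})$, and the bounded-$t$ contribution is itself controlled since $\Phi(u_n)$ is then pointwise bounded by a constant times the measure of the relevant set, which is finite and made small. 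On the small-potential set $A_n$, condition $(V_2)$ guarantees that, after covering $\mathbb{R}^N\setminus B_R$ by unit balls of radius $\nu$, each such ball far from the origin contributes measure tending to $0$; combining this with the (uniform in $n$) higher-integrability coming from $W^{s,M}(\mathbb{R}^N)\hookrightarrow L^{M_*}$ and H\"older's inequality in Orlicz spaces (pairing $L^{M_*}$ with the complementary function on a small-measure set), the integral of $\Phi(u_n)$ over $A_n$ is made arbitrarily small uniformly in $n$.

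Putting the three estimates together — local strong convergence, smallness of the large-potential tail via \eqref{Mphi} and the bound on $\int V M(u_n)$, and smallness of the small-potential tail via $(V_2)$ and the $L^{M_*}$ bound — one gets $\limsup_n \int_{\mathbb{R}^N}\Phi(u_n)\,dx \le \varepsilon$ for every $\varepsilon>0$, hence $\int_{\mathbb{R}^N}\Phi(u_n)\,dx\to 0$, and since $\Phi\in\triangle_2$ this is equivalent to $u_n\to 0$ in $L^{\Phi}(\mathbb{R}^N)$. This proves compactness of $E\hookrightarrow L^{\Phi}(\mathbb{R}^N)$.

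\textbf{Main obstacle.} The delicate point is the uniform-in-$n$ control of the tail on the \emph{small-potential} set $A_n$: there $V$ gives no help, so the smallness must come entirely from $(V_2)$ (which says $A_n$ has small measure near infinity in the $\nu$-ball sense) combined with the improved integrability $u_n\in L^{M_*}$ that is uniform over $n$. Making the Orlicz-space H\"older estimate on a set of small measure quantitative — i.e. showing $\|\chi_{A_n}\|$ in the complementary norm is small when $\meas(A_n)$ is small, uniformly — and correctly interfacing it with $\Phi\prec\prec M_*$ is the technical heart of the argument. The large-potential tail, by contrast, is routine once \eqref{Mphi} and the modular bound $\int V M(u_n)\le C$ are in hand.
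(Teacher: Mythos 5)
Your proposal is correct and follows essentially the same route as the paper: local compactness from Theorem \ref{inj} on balls gives $u_n\to 0$ in $L^{\Phi}_{loc}$, and the tail over $B_R^c$ is split by the level of $V$, with \eqref{Mphi} and the modular bound on $\int_{\mathbb{R}^N} V(x)M(u_n)\,dx$ handling the large-potential part and $(V_2)$ plus a covering by $\nu$-balls and an Orlicz--H\"older inequality (smallness of the complementary norm of the characteristic functions) handling the small-potential part. The only cosmetic difference is that the paper runs the H\"older step with an auxiliary $N$-function $C$ satisfying $C\circ\Phi\prec\prec M$, bounding $\|\Phi(u_n)\|_{L^C}$ through $\|u_n\|_{(M)}$, rather than through the $L^{M_*}$ embedding as you suggest; the mechanism is identical.
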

\begin{proof}
  Let $(u_{n})\subset E$ be a sequence verifying $u_{n}\rightharpoonup 0\ \text{in}\ E.$ We have to show that $u_{n}\rightarrow0$ in $L^{\Phi}(\mathbb{R}^{N})$. By using Theorem \ref{thm1} we know that $u_{n}\rightarrow0$ in $L^{\Phi}_{loc}(\mathbb{R}^{N})$.
  Thus it suffices to show that, for any $\epsilon>0$, there exists $R > 0$ such that $$\int_{B^{c}_{R}(0)}\Phi(u_{n})dx<\epsilon;$$ here $B^{c}_{R}(0)=\mathbb{R}^{N}\setminus B_{R}(0)$.\\
   Choose $(y_{i})_{i\in\mathbb{N}}\subset\mathbb{R}^{N}$ such that $\mathbb{R}^{N}\subset \bigcup_{i\in\mathbb{N}}B_{\nu}(y_{i})$ and each point $x\in \mathbb{R}^{N}$ is contained in at most $2^{N}$ such balls $B_{\nu}(y_{i})$. Let $$A_{R,L}=\{x\in B_{R}^{c}:\ V(x)\geq L\}\ \ \text{and}\ \ B_{R,L}=\{x\in B_{R}^{c}:\ V(x)< L\}.$$
  The fact that $(u_{n})$ converges weakly to $u$ in $E$ implies that $\|u_{n}\|_{E}\leq T,\ \forall n\in\mathbb{N}$ with $T>0$. From the $\triangle_{2}$ condition, there is $K>0$ such that \begin{equation}\label{K}
                                          M(T \frac{u_{n}}{T})\leq K M(\frac{u_{n}}{T}),\ \forall n\in\mathbb{N}.
                                      \end{equation}
  Given $\epsilon>0$, by \eqref{Mphi}, there is $R>0$ such that              \begin{equation}\label{KK}
                                                             \Phi(t)\leq \epsilon M(t),\ \text{if}\ |t|>R.
                                                           \end{equation}
   Combining \eqref{K} and \eqref{KK}, we get
  \begin{align}\label{ARL}
    \int_{A_{R,L}}\Phi(u_{n})dx &\leq \frac{\epsilon}{L}\int_{\mathbb{R}^{N}}V(x)M(u_{n})dx\\
                                &\leq \frac{\epsilon K}{L}\int_{\mathbb{R}^{N}}V(x)M\bigg{(}\frac{u_{n}}{T}\bigg{)}dx\nonumber\\
                                &\leq\frac{\epsilon K}{L}\int_{\mathbb{R}^{N}}V(x)M\bigg{(}\frac{u_{n}}{\|u_{n}\|_{(V,M)}}\bigg{)}dx\nonumber\\
                                &\leq\frac{\epsilon K}{L}\nonumber
  \end{align}
  and this can be made arbitrarily small by choosing $L$ large.\\

   Take $C$ an $N$-function such that $C\circ\Phi\prec\prec M$ and let $\overline{C}$ be the conjugate of $C$.
  By Theorem $4.17.4$ in \cite{Kufner}, there exist $K^{'}>0$ such that
                         \begin{equation}\label{C}
                           \|u\|_{C\circ\Phi}\leq K^{'} \|u\|_{(M)},\ \forall u\in L^{M}(\mathbb{R}^{N}).
                         \end{equation}

   \textbf{Claim1:} $\xi_{1}(\|u_{n}\|_{(M)})\leq \xi_{1}\bigg{(}\displaystyle\frac{1}{V_{0}}\xi_{1}(\|u_{n}\|_{E})+1\bigg{)}$\\
   Indeed, using Lemma \ref{lem3}, we get \begin{align*}
             \xi_{1}(\|u_{n}\|_{M}) &\leq \xi_{1}\bigg{(}\int_{\mathbb{R}^{N}}M(u_{n})dx+1\bigg{)}\\
                                        &\leq \xi_{1}\bigg{(}\frac{1}{V_{0}}\int_{\mathbb{R}^{N}}V(x)M(u_{n})dx+1\bigg{)}\\
                                        &\leq \xi_{1}\bigg{(}\frac{1}{V_{0}}\xi_{1}(\|u_{n}\|_{(V,M)})+1\bigg{)}\\
                                        &\leq \xi_{1}\bigg{(}\displaystyle\frac{1}{V_{0}}\xi_{1}(\|u_{n}\|_{E})+1\bigg{)}.
           \end{align*}

   We fixe $L>0$. Combining \eqref{C}, claim $1$ and Lemma \ref{lem2} and applying the H\"{o}lder inequality, we infer that
  \begin{align}\label{BRL}
     \int_{B_{R,L}}\Phi(u_{n})dx &\leq \sum_{i\in\mathbb{N}}\int_{B_{R,L}\bigcap B_{\nu}(y_{i})}\Phi(u_{n})dx\\
                                 &\leq \sum_{i\in\mathbb{N}}\|\Phi(u_{n})\|_{L^{C}(B_{\nu}(y_{i}))}\|\chi_{B_{R,L}\bigcap B_{\nu}(y_{i})}\|_{L^{\overline{C}}(B_{\nu}(y_{i}))}\nonumber\\
                                 &\leq \epsilon_{R} \sum_{i\in\mathbb{N}}\|\Phi(u_{n})\|_{L^{C}(B_{\nu}(y_{i}))}\leq \epsilon_{R} 2^{N}\|\Phi(u_{n})\|_{L^{C}(\mathbb{R}^{N})}\nonumber\\
                                 &\leq \epsilon_{R} 2^{N} \bigg{\{}\int_{\mathbb{R}^{N}}C(\Phi(u_{n}))dx+1\bigg{\}}\nonumber\\
                                 &\leq \epsilon_{R} 2^{N}\{ \xi_{1}[\|u_{n}\|_{(C\circ\Phi)}]+1\}\nonumber\\
                                 &\leq \epsilon_{R} 2^{N}\{K^{''}\xi_{1}[\|u_{n}\|_{(M)}]+1\}\nonumber\\
                                 &\leq \epsilon_{R} 2^{N}\bigg{\{}K^{''}\xi_{1}\bigg{[}\frac{1}{V_{0}}\xi_{1}(T)+1\bigg{]}+1\bigg{\}}\nonumber
  \end{align}
  where $\epsilon_{R}=\sup_{y_{i}}\|\chi_{B_{R,L}\bigcap B_{\nu}(y_{i})}\|_{L^{\overline{C}}(B_{\nu}(y_{i}))}$ and $K^{''}>0$. By assumption $(V_{2})$ and Proposition 4.6.9 in \cite{Kufner} we can infer that $\epsilon_{R}\rightarrow0$ as $R\rightarrow\infty$. Thus we may make this term small by choosing $R$ large. Combining \eqref{ARL} and \eqref{BRL} we get our desired result.
\end{proof}

\begin{corollary}\label{cor1}
  Under $(M_{1})$ and $(M_{2})$, the embedding from $E$ into $L^{\mu}(\mathbb{R}^{N})$ is compact for all $1<\mu\leq r$.
\end{corollary}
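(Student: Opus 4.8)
The plan is to derive Corollary \ref{cor1} straight from Lemma \ref{lem1}, applied to the $N$-function $\Phi(t)=C|t|^{\mu}$ for an arbitrary fixed $\mu\in\,]1,r]$, where $C$ is the constant in $(M_{1})$. Since multiplying a Young function by a positive constant only rescales its Luxemburg norm, $L^{\Phi}(\mathbb{R}^{N})$ and $L^{\mu}(\mathbb{R}^{N})$ coincide with equivalent norms, so it suffices to verify the three hypotheses of Lemma \ref{lem1} for this $\Phi$.

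Two of them are immediate. First, $\Phi(t)=C|t|^{\mu}$ is an $N$-function (up to an inessential normalisation) since $\mu>1$, and it satisfies the $\triangle_{2}$ condition because $\Phi(2t)=2^{\mu}\Phi(t)$. Second, condition \eqref{Mphi}, namely $\lim_{|t|\rightarrow+\infty}\Phi(t)/M(t)=0$, is exactly hypothesis $(M_{2})$.

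The remaining and genuinely nontrivial point is $\Phi\prec\prec M_{*}$, i.e. $\lim_{t\rightarrow+\infty}\Phi(kt)/M_{*}(t)=0$ for every $k>0$. Hypothesis $(M_{1})$ gives the lower bound $C|t|^{\sigma}\le M_{*}(t)$ for every $\sigma\in\,]1,r]$; so for $\mu<r$ I would pick $\sigma\in\,]\mu,r]$ and estimate $\Phi(kt)/M_{*}(t)\le k^{\mu}|t|^{\mu-\sigma}\rightarrow0$. For the endpoint $\mu=r$ one has to leave $]1,r]$, and here I would use $(m_{2})$ --- which yields $r<l^{*}=Nl/(N-l)$ --- together with the construction \eqref{6} of the Sobolev conjugate and Lemma \ref{lem2} (which controls $M^{-1}$), to see that $M_{*}$ grows strictly faster than $|t|^{r}$ at infinity, hence $|t|^{r}\prec\prec M_{*}$. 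Making this last growth comparison precise is the only real obstacle; everything else is bookkeeping.

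With the three hypotheses checked, Lemma \ref{lem1} gives that $E\hookrightarrow L^{\Phi}(\mathbb{R}^{N})=L^{\mu}(\mathbb{R}^{N})$ is compact, and since $\mu\in\,]1,r]$ was arbitrary, the corollary follows.
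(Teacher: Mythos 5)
Your proposal is correct and takes exactly the route the paper does (the paper's own proof is a one-liner: set $\Phi(t)=C|t|^{\mu}$ and invoke $(M_{1})$, $(M_{2})$ and Lemma \ref{lem1}); you simply spell out the three hypotheses of Lemma \ref{lem1} that the paper leaves implicit. One remark on the point you flag as ``the only real obstacle'': the endpoint $\mu=r$ needs no separate treatment. You are using $(M_{1})$ only in the form $C|t|^{\sigma}\leq M_{*}(t)$, i.e.\ with $a=1$, but $(M_{1})$ is stated with the quantifier ``for all $a$'': $C|t|^{\mu}\leq M_{*}(at)$ for every $a>0$. Comparing with the paper's definition of $B\prec\prec A$ (for each $a>0$ one has $B(x)\leq A(ax)$ for $x\geq x_{0}$), this is verbatim the statement $C|t|^{\mu}\prec\prec M_{*}$ for every $1<\mu\leq r$, endpoint included. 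So the growth comparison you propose to extract from $r<l^{*}$ and the construction \eqref{6} of $M_{*}$ (which would indeed give $M_{*}(t)\gtrsim t^{Nl/(N-sl)}$ with $Nl/(N-sl)>r$) is a legitimate way to \emph{justify} that $(M_{1})$ is a reasonable hypothesis, but it is not needed to prove the corollary: $(M_{1})$ is assumed, and it already \emph{is} the relation $\Phi\prec\prec M_{*}$.
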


\begin{proof}
Let $\Phi(t)=C|t|^{\mu}$. By condition $(M_{1})$, $(M_{2})$ and applying Lemma \ref{lem1}, we can deduce that $E$ is compactly embedded in $L^{\mu}(\mathbb{R}^{N})$ for all $1<\mu\leq r$.
\end{proof}

\begin{lemma}\label{lem5}
  The functional $A$ is weakly lower semi-continuous.
\end{lemma}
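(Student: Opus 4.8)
The plan is to show that the functional $A=G+\Psi$ is \emph{convex} and then to invoke the classical fact that a convex, Gâteaux‑differentiable functional on a Banach space is sequentially weakly lower semi‑continuous. Differentiability of $A$ is already available: formula \eqref{I'} presupposes the existence of $A'$ (and $B'$), so the only point that requires a genuine argument is convexity.

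\emph{Step 1: convexity of $A$.} The map $u\mapsto h_{u}$ is linear, that is, $h_{tu+(1-t)v}=t\,h_{u}+(1-t)\,h_{v}$ pointwise on $\mathbb{R}^{N}\times\mathbb{R}^{N}$ for $t\in[0,1]$ (the common denominator $|x-y|^{s}M^{-1}(|x-y|^{N})$ factors out), and $u\mapsto u$ is trivially linear. Since $M$ is convex and $V\ge V_{0}>0$ by $(V_{1})$, for $u,v\in E$ and $t\in[0,1]$ I would first record the pointwise inequalities
$$M\big(h_{tu+(1-t)v}\big)\le t\,M(h_{u})+(1-t)\,M(h_{v}),\qquad V(x)M\big(tu+(1-t)v\big)\le t\,V(x)M(u)+(1-t)\,V(x)M(v),$$
and then integrate the first over $\mathbb{R}^{N}\times\mathbb{R}^{N}$ and the second over $\mathbb{R}^{N}$. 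By Lemma \ref{lem3} all the integrals $G(u),G(v),\Psi(u),\Psi(v)$ are finite, so the integration is legitimate and yields $G(tu+(1-t)v)\le tG(u)+(1-t)G(v)$ and $\Psi(tu+(1-t)v)\le t\Psi(u)+(1-t)\Psi(v)$; adding them shows $A$ is convex.

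\emph{Step 2: from convexity to weak lower semicontinuity.} Being convex and Gâteaux differentiable, $A$ obeys the subgradient inequality
$$A(v)\ \ge\ A(u)+\langle A'(u),v-u\rangle\qquad\text{for all }u,v\in E.$$
Let now $u_{n}\rightharpoonup u$ in $E$. Choosing $v=u_{n}$ gives $A(u_{n})\ge A(u)+\langle A'(u),u_{n}-u\rangle$. Since $A'(u)\in E^{\ast}$ and $u_{n}-u\rightharpoonup 0$, the pairing $\langle A'(u),u_{n}-u\rangle\to 0$, hence $\liminf_{n\to\infty}A(u_{n})\ge A(u)$, which is precisely the weak lower semi‑continuity of $A$.

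\emph{Main obstacle.} The analysis is essentially routine; the only point needing care is the finiteness/integrability used in Step 1 to pass from the pointwise convexity inequality to the integrated one, and this is supplied by the a priori bounds of Lemma \ref{lem3} together with $V\ge V_{0}>0$. If one prefers not to rely on the $C^{1}$ (Gâteaux) structure at all, an alternative is to establish strong lower semicontinuity of $A$ directly via Fatou's lemma — extract from any sequence realizing the $\liminf$ a subsequence converging a.e. (so that $h_{u_{n}}\to h_{u}$ and $u_{n}\to u$ a.e.), and use $M\ge 0$, $V\ge 0$ — and then transport this to the weak topology by combining convexity with Mazur's lemma. I would nonetheless present the shorter convexity‑plus‑differentiability argument above.
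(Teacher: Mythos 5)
Your proof is correct, but it takes a genuinely different route from the paper. The paper splits $A=G+\Psi$, delegates the weak lower semicontinuity of $G$ to an external result (Lemma 3.3 of \cite{Sabri}), and treats $\Psi$ by the compact embedding $E\hookrightarrow L^{r}(\mathbb{R}^{N})$ (Corollary \ref{cor1}): weak convergence in $E$ gives strong $L^{r}$ convergence, hence a.e.\ convergence along a subsequence, and Fatou's lemma then yields $\Psi(u)\le\liminf\Psi(u_{n})$. You instead prove convexity of $A$ directly — linearity of $u\mapsto h_{u}$ plus convexity of $M$ and $V\ge 0$ — and combine it with the subgradient inequality for the $C^{1}$ functional $A$. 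Your route is more self-contained and more economical in hypotheses: it needs neither $(V_{2})$ nor $(M_{1})$--$(M_{2})$ nor the compactness machinery, only $(V_{1})$ and the convexity of $M$. It is also well matched to the rest of the paper, which already invokes both the convexity of $A$ and the very inequality $A(u)\ge A(u_{n})+\langle A'(u_{n}),u-u_{n}\rangle$ in the proof of Lemma \ref{lem4}; in that sense you derive Lemma \ref{lem5} from facts the paper assumes anyway. One further small advantage: the paper's Fatou argument only gives a.e.\ convergence \emph{along a subsequence}, so strictly speaking it needs the standard ``every subsequence has a further subsequence realizing the liminf'' refinement to conclude for the whole sequence; your argument avoids this issue entirely. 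What the paper's approach buys in exchange is robustness — the Fatou/compactness mechanism would survive even if the integrand were not convex.
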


\begin{proof}
  By Lemma $3.3$ in \cite{Sabri}, it is enough to show that $\Psi$ is weakly lower semi-continuous. Let $(u_{n})\subset E$ be a sequence which converges weakly to $u$ in $E$. Since $E$ is compactly embedded in $L^{r}(\mathbb{R}^{N})$ it follows that $(u_{n})$ converges strongly to $u$ in $L^{r}(\mathbb{R}^{N})$. Then, up to a subsequence, we obtain $$u_{n}(x)\rightarrow u(x),\ a.e\ \text{in}\ \mathbb{R}^{N}.$$
  This along with Fatou's lemma yield $$\Psi(u)\leq \liminf_{n\rightarrow\infty}\Psi(u_{n}).$$
  Therefore, $A$ is weakly lower semi-continuous. The proof of Lemma \ref{lem5} is complete.
\end{proof}

\begin{lemma}\label{lem4}
  If $u_{n}\rightharpoonup u$ in $E$ and
  \begin{equation}\label{PsiG}
  \langle A^{'}(u_{n}),u_{n}-u\rangle\rightarrow0,
  \end{equation}
   then $u_{n}\rightarrow u$ in $E$.
\end{lemma}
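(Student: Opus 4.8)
The plan is to establish an $(S_+)$-type property for $A$ by a uniform-convexity argument adapted to the Orlicz setting, using that $A=G+\Psi$ is convex and nonnegative on $E$ together with hypothesis $(M_3)$. Note first that, since $M$ is convex, $V\geq 0$, and the map $u\mapsto h_u$ is linear, each of $G$ and $\Psi$ — hence $A$ — is convex on $E$, and $A\in C^{1}(E,\mathbb{R})$; moreover $G\geq 0$, $\Psi\geq 0$, so $A\geq 0$.

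Step 1: show $A(u_n)\to A(u)$. By the subgradient inequality for the convex $C^{1}$ functional $A$, one has $A(u)\geq A(u_n)+\langle A'(u_n),u-u_n\rangle$, so $A(u_n)\leq A(u)+\langle A'(u_n),u_n-u\rangle$; letting $n\to\infty$ and invoking \eqref{PsiG} gives $\limsup_{n}A(u_n)\leq A(u)$. Since $A$ is weakly lower semicontinuous by Lemma \ref{lem5} and $u_n\rightharpoonup u$, we also have $A(u)\leq\liminf_{n}A(u_n)$, and therefore $A(u_n)\to A(u)$.

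Step 2: derive a Clarkson-type inequality from $(M_3)$ and apply it. Setting $\Phi(t):=M(\sqrt{t})$, hypothesis $(M_3)$ says $\Phi$ is convex, and since $\Phi(0)=0$ it is superadditive, $\Phi(s)+\Phi(t)\leq\Phi(s+t)$. Combining this with the identity $\bigl|\tfrac{a+b}{2}\bigr|^{2}+\bigl|\tfrac{a-b}{2}\bigr|^{2}=\tfrac12(a^{2}+b^{2})$ yields, for all $a,b\in\mathbb{R}$,
$$M\!\left(\Bigl|\tfrac{a+b}{2}\Bigr|\right)+M\!\left(\Bigl|\tfrac{a-b}{2}\Bigr|\right)\leq\tfrac12\bigl(M(|a|)+M(|b|)\bigr).$$
Applying this pointwise with $a=h_{u_n}(x,y)$, $b=h_u(x,y)$ (using $h_{(u_n\pm u)/2}=\tfrac12(h_{u_n}\pm h_u)$) and integrating over $\mathbb{R}^{2N}$, and likewise with $a=u_n(x)$, $b=u(x)$ against the nonnegative weight $V(x)$ and integrating over $\mathbb{R}^{N}$, we obtain
$$A\!\left(\tfrac{u_n+u}{2}\right)+A\!\left(\tfrac{u_n-u}{2}\right)\leq\tfrac12\bigl(A(u_n)+A(u)\bigr).$$
Now $\tfrac{u_n+u}{2}\rightharpoonup u$ in $E$, so Lemma \ref{lem5} gives $\liminf_{n}A\bigl(\tfrac{u_n+u}{2}\bigr)\geq A(u)$; together with Step 1 this forces $\limsup_{n}A\bigl(\tfrac{u_n-u}{2}\bigr)\leq 0$, and since $A\geq 0$ we conclude $A\bigl(\tfrac{u_n-u}{2}\bigr)\to 0$. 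Because $A\bigl(\tfrac{u_n-u}{2}\bigr)=G\bigl(\tfrac{u_n-u}{2}\bigr)+\Psi\bigl(\tfrac{u_n-u}{2}\bigr)$ with both summands nonnegative, each tends to $0$. Finally, Lemma \ref{lem3} yields $\xi_{0}\!\bigl([\tfrac{u_n-u}{2}]_{(s,M)}\bigr)\to 0$ and $\xi_{0}\!\bigl(\|\tfrac{u_n-u}{2}\|_{(V,M)}\bigr)\to 0$; since $\xi_{0}(t)=\min\{t^{l},t^{r}\}$ is an increasing homeomorphism of $[0,\infty)$ with $\xi_{0}(0)=0$, both $[u_n-u]_{(s,M)}$ and $\|u_n-u\|_{(V,M)}$ tend to $0$, i.e.\ $\|u_n-u\|\to 0$, which is the assertion.

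The main obstacle is Step 2: checking the Clarkson-type inequality from $(M_3)$ and verifying that it transfers correctly through the double integral defining $G$ and the $V$-weighted integral defining $\Psi$ (the linearity $h_{(u_n\pm u)/2}=\tfrac12(h_{u_n}\pm h_u)$ and $V\geq 0$ are what make this work). A secondary point to handle with care is the legitimacy of applying the weak lower semicontinuity of $A$ both to $(u_n)$ and to the averaged sequence $(\tfrac{u_n+u}{2})$ — both converge weakly to $u$, so Lemma \ref{lem5} applies verbatim — and one should note that the convexity/subgradient route in Step 1 conveniently sidesteps having to prove $\langle A'(u),u_n-u\rangle\to 0$ separately.
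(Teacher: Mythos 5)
Your proof is correct and follows essentially the same route as the paper's: convexity of $A$ plus weak lower semicontinuity (Lemma \ref{lem5}) to obtain $A(u_n)\to A(u)$, followed by the Clarkson-type inequality $A\bigl(\tfrac{u_n+u}{2}\bigr)+A\bigl(\tfrac{u_n-u}{2}\bigr)\le\tfrac12\bigl(A(u_n)+A(u)\bigr)$ to force $A\bigl(\tfrac{u_n-u}{2}\bigr)\to0$ and hence norm convergence via Lemma \ref{lem3}. The only differences are inessential: you derive the Clarkson inequality directly from $(M_{3})$ (superadditivity of the convex map $t\mapsto M(\sqrt{t})$ combined with the parallelogram identity), where the paper cites Lamperti's Theorem 2.1, and you argue directly rather than by contradiction.
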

\begin{proof}
	Since $(u_{n})$ converges weakly to $u$ in $E$ implies that $([u_{n}]_{(s,M)})$ and $(\|u_{n}\|_{(V,M)})$ are a bounded sequences of real numbers. That fact and relations $(i)$ and $(ii)$ from lemma \ref{lem3} imply that the sequences $(G(u_{n}))$ and $(\Psi(u_{n}))$ are bounded, it means that the sequence $(A(u_{n}))$ is bounded. Then, up to a subsequence,
we deduce that $A(u_{n})\rightarrow c$.
	Furthermore, Lemma \ref{lem5}, implies
	\begin{equation}\label{23}
	A(u)\leq \displaystyle\liminf_{n\rightarrow\infty}
	A(u_{n})=c.
	\end{equation}
	On the other hand, since $A$ is convex, we have
                                                     \begin{equation}\label{24}
                                                       A(u)\geq A(u_{n})+\langle A^{'}(u_{n}),u-u_{n}\rangle.
                                                     \end{equation}
	Therefore, combinings \eqref{23} and \eqref{24} and the hypothesis \eqref{PsiG}, we conclude that $A(u)=c$.\\
	Taking into account that $\displaystyle\frac{u_{n}+u}{2}$ converges weakly to $u$ in $E$ and using again the weak lower semi-continuity of $A$ we find
	\begin{equation}\label{11}
	c=A(u)\leq \displaystyle\liminf_{n\rightarrow\infty}A\bigg{(}\frac{u_{n}+u}{2}\bigg{)}.
	\end{equation}
	We assume by contradiction that $(u_{n})$ does not converge to $u$ in $E$. Then by $(i)$ and $(ii)$ in lemma \ref{lem3} it follows that there exist $\epsilon>0$ and a subsequence $(u_{n_{m}})$ of $(u_{n})$ such that
	\begin{equation}\label{12}
	A\bigg{(}\frac{u_{n_{m}}-u}{2}\bigg{)}\geq\epsilon,\ \forall\ m\in\mathbb{N}.
	\end{equation}
	On the other hand, relations \eqref{delta2} and $(M_{3})$ enable us to apply [\cite{17}, theorem 2.1] in order to obtain
	\begin{equation}\label{13}
	\frac{1}{2}A(u)+\frac{1}{2}A(u_{n_{m}})-A\bigg{(}\frac{u_{n_{m}}+u}{2}\bigg{)}\geq A\bigg{(}\frac{u_{n_{m}}-u}{2}\bigg{)}\geq\epsilon,\ \forall m\in\mathbb{N}.
	\end{equation}
	Letting $m\rightarrow\infty$ in the above inequality we obtain
	\begin{equation}\label{14}
	c-\epsilon\geq\displaystyle \limsup_{m\rightarrow\infty}A\bigg{(}\frac{u_{n_{m}}+u}{2}\bigg{)}.
	\end{equation}
	and that is a contradiction with \eqref{11}. It follows that  $(u_{n})$ converges strongly to $u$ in $E$ and lemma \ref{lem4} is proved.
\end{proof}

\section{Proof of Theorem \ref{thm1}}

We further need the following lemmas.

\begin{lemma}\label{Bpositive}
  Let $(V_{1})$, $(V_{2})$, $(M_{1})$, $(M_{2})$ and $(f_{1})$ be satisfied. Then $B(u)\geq0$. Furthermore, $B(u)\rightarrow\infty$ as $\|u\|\rightarrow\infty$ on any finite dimensional subspace of $E$.
\end{lemma}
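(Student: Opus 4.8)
The first assertion is immediate and I would simply record it: by $(f_1)$ we have $F(x,u)=\xi(x)|u|^p$ with $\xi$ positive, hence $B(u)=\int_{\mathbb{R}^N}\xi(x)|u|^p\,dx\ge 0$ for every $u\in E$.

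For the growth statement on a finite-dimensional subspace, the plan is to show that $u\mapsto B(u)^{1/p}$ defines a genuine norm on such a subspace and then invoke the equivalence of all norms in finite dimension. First I would check that $B$ is finite (indeed continuous) on $E$: applying Hölder's inequality with the conjugate exponents $\tfrac{r}{r-p}$ and $\tfrac{r}{p}$ (legitimate since $1<p<l\le r$) gives
\[
B(u)=\int_{\mathbb{R}^N}\xi(x)|u|^p\,dx\le\|\xi\|_{L^{r/(r-p)}(\mathbb{R}^N)}\,\|u\|_{L^r(\mathbb{R}^N)}^{\,p},
\]
and Corollary \ref{cor1} with $\mu=r$ yields the continuous embedding $E\hookrightarrow L^r(\mathbb{R}^N)$, so $B(u)\le C\|u\|^p<\infty$.

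Next, fixing a finite-dimensional subspace $W\subset E$, I would set $\|u\|_\xi:=\big(\int_{\mathbb{R}^N}\xi(x)|u|^p\,dx\big)^{1/p}$, which is precisely the $L^p$-norm with respect to the measure $\xi\,dx$. Homogeneity is clear, the triangle inequality is Minkowski's inequality in $L^p(\xi\,dx)$, and positive-definiteness is the single point that uses the hypotheses on $\xi$: if $\|u\|_\xi=0$ then $\xi|u|^p=0$ a.e., and since $\xi$ is continuous and strictly positive on $\mathbb{R}^N$, necessarily $u=0$. Thus $\|\cdot\|_\xi$ is a norm on $W$; since $\dim W<\infty$ it is equivalent to $\|\cdot\|$, so there is $c_W>0$ with $\|u\|_\xi\ge c_W\|u\|$ for all $u\in W$. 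Consequently $B(u)=\|u\|_\xi^{\,p}\ge c_W^{\,p}\|u\|^p\to+\infty$ as $\|u\|\to\infty$ within $W$, which is the claim.

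The argument is essentially routine; the only delicate step is the equivalence-of-norms reduction, which is valid only once one knows $\|\cdot\|_\xi$ is both finite on $E$ (this is where $(M_1)$, $(M_2)$, $(V_1)$, $(V_2)$ enter, through Corollary \ref{cor1}) and positive-definite (this is where the positivity and continuity of $\xi$ in $(f_1)$ enter). I do not expect a serious obstacle beyond being careful about these two inputs.
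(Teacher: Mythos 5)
Your proof is correct, but it takes a genuinely different route from the paper's. The paper proves the intermediate uniform measure estimate \eqref{epsilon}, namely that for each finite-dimensional $F\subset E$ there is $\epsilon>0$ with $\meas\big(\{x:\ \xi(x)|u(x)|^{p}\geq\epsilon\|u\|^{p}\}\big)\geq\epsilon$ for all $u\in F\setminus\{0\}$; this is done by contradiction, normalizing $v_{n}=u_{n}/\|u_{n}\|$, extracting a limit in the finite-dimensional space, and playing the compact embedding $E\hookrightarrow L^{r}(\mathbb{R}^{N})$ and H\"older against a measure-theoretic lower bound on $\Lambda_{n}\cap\Lambda_{0}$; the conclusion $B(u)\geq\epsilon^{2}\|u\|^{p}$ then follows by integrating over $\Lambda_{u}$. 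You instead note that, because $F(x,u)=\xi(x)|u|^{p}$ exactly, $u\mapsto B(u)^{1/p}$ is the $L^{p}(\xi\,dx)$-norm — finite on $E$ by H\"older and Corollary \ref{cor1}, positive-definite by the strict positivity of $\xi$, subadditive by Minkowski — so equivalence of norms in finite dimension gives $B(u)\geq c_{W}^{p}\|u\|^{p}$ directly. Both arguments land on the same polynomial lower bound, and yours is shorter and avoids the level-set bookkeeping. What the paper's version buys is twofold: the estimate \eqref{epsilon} is reused verbatim in Lemma \ref{bk} (to produce $\epsilon_{k}$ and the set $\Lambda_{u}^{k}$), and the level-set technique survives for more general sublinear nonlinearities $F(x,u)$ for which $B^{1/p}$ is no longer a norm. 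For the specific hypothesis $(f_{1})$ of this paper your bound $B(u)\geq c_{W}^{p}\|u\|^{p}$ would serve equally well in Lemma \ref{bk}, so your simplification is legitimate here; just be aware that if you adopt it you must also rewrite the proof of Lemma \ref{bk}, which currently cites \eqref{epsilon}.
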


\begin{proof}
Evidently $B(u)\geq0$ follows by $(f_{1})$.
  We claim that for any finite dimensional subspace $F \subset E$  there exists a constant $ \epsilon> 0$ such that
  \begin{equation}\label{epsilon}
  \meas(\{x\in\mathbb{R}^{N}:\ \xi(x)|u(x)|^{p}\geq\epsilon\|u\|^{p}\})\geq\epsilon,\ \forall u\in F\setminus{0}.
  \end{equation}
We argue by contradiction and we suppose that for any $n\in\mathbb{N}$ there exists $0\neq u_{n}\in F$ such that
$$\meas(\{x\in\mathbb{R}^{N}:\ \xi(x)|u_{n}(x)|^{p}\geq\frac{1}{n}\|u_{n}\|^{p}\})<\frac{1}{n},\ \forall n\in\mathbb{N}.$$
 For each $n\in\mathbb{N}$, let $v_{n}=\displaystyle\frac{u_{n}}{\|u_{n}\|}\in F$. Then $\|v_{n}\|=1$ for all $n\in\mathbb{N}$, and
 \begin{equation}\label{measn}
   \meas(\{x\in\mathbb{R}^{N}:\ \xi(x)|v_{n}(x)|^{p}\geq\frac{1}{n}\})<\frac{1}{n},\ \forall n\in\mathbb{N}.
 \end{equation}
 Up to a subsequence, we may assume that $v_{n}\rightarrow v$ in $E$ for some $v\in F$ since $F$ is a finite dimensional space. Clearly $\|v\|=1$. Consequently, there exists a constant $\delta_{0}>0$ such that
 \begin{equation}\label{delta0}
   \meas(\{x\in\mathbb{R}^{N}:\ \xi(x)|v(x)|^{p}\geq\delta_{0}\})\geq \delta_{0}.
 \end{equation}

 In fact, if not, then we have
  $$\meas(\{x\in\mathbb{R}^{N}:\ \xi(x)|v(x)|^{p}\geq\frac{1}{n}\})=0\ \forall n\in\mathbb{N},$$
  which implies that
  $$0\leq \int_{\mathbb{R}^{N}}\xi(x)|v|^{p+r}dx\leq\frac{\|v\|^{r}_{L^{r}(\mathbb{R}^{N})}}{n}\rightarrow0\ \text{as}\ n\rightarrow+\infty.$$
  This together $(f_{1})$ yields $v=0$, which is in contradiction to $\|v\|=1$.

 By using Corollary \ref{cor1} and the fact that all norms are equivalent on $F$, we deduce that

 $$  |v_{n}-v|_{L^{r}(\mathbb{R}^{N})}^{r}\rightarrow0\ \text{as}\ n\rightarrow\infty.$$
 By the H\"{o}lder inequality, it holds that
 \begin{equation}\label{Lr}
   \int_{\mathbb{R}^{N}}\xi(x)|v_{n}-v|^{p}dx\leq \|\xi\|_{\frac{r}{r-p}}\bigg{(}\int_{\mathbb{R}^{N}}|v_{n}-v|^{r}dx\bigg{)}^{\frac{p}{r}}\rightarrow0\ \text{as}\ n\rightarrow\infty.
 \end{equation}
 Set $$\Lambda_{0}:=\{x\in\mathbb{R}^{N}:\ \xi(x)|v(x)|^{p}\geq\delta_{0}\}$$ and for all $n\in\mathbb{N}$,
 $$\Lambda_{n}:=\{x\in\mathbb{R}^{N}:\ \xi(x)|v_{n}(x)|^{p}<\frac{1}{n}\},\ \ \Lambda_{n}^{c}:=\mathbb{R}^{N}\backslash\Lambda_{n}.$$
 Taking into account \eqref{measn} and \eqref{delta0}, we get
 $$\meas(\Lambda_{n}\cap\Lambda_{0})\geq\meas(\Lambda_{0})-\meas(\Lambda_{n}^{c})\geq\delta_{0}-\frac{1}{n}\geq\frac{\delta_{0}}{2},$$
 for $n$ large enough. Therefore we obtain
 \begin{align*}
   \int_{\mathbb{R}^{N}}\xi(x)|v_{n}-v|^{p}dx & \geq \int_{\Lambda_{n}\cap\Lambda_{0}}\xi(x)|v_{n}-v|^{p}dx\\
                                        & \geq \frac{1}{2^{p}}\int_{\Lambda_{n}\cap\Lambda_{0}}\xi(x)|v|^{p}-\int_{\Lambda_{n}\cap\Lambda_{0}}\xi(x)
                                        |v_{n}|^{p})dx\\
                                        & \geq \bigg{(}\frac{\delta_{0}}{2^{p}}-\frac{1}{n}\bigg{)}\meas(\Lambda_{n}\cap\Lambda_{0})\\
                                        & \geq \bigg{(}\frac{\delta_{0}^{2}}{2^{p+2}}\bigg{)}>0
 \end{align*}
 which contradicts \eqref{Lr}. For the $\epsilon$ given in \eqref{epsilon}, let
  $$\Lambda_{u}=\{x\in\mathbb{R}^{N}:\ \xi(x)|u(x)|^{p}\geq\epsilon\|u\|^{p}\},\ \forall u\in F\backslash\{0\}.$$
  Then by \eqref{epsilon},
  $$\meas(\Lambda_{u})\geq\epsilon,\ \forall u\in F\backslash\{0\}.$$
   Therefore
   $$B(u)=\int_{\mathbb{R}^{N}}\xi(x)|u|^{p}dx\geq \int_{\Lambda_{u}}\epsilon\|u\|^{p} dx\geq\epsilon\|u\|^{p}\meas(\Lambda_{u})=\epsilon^{2}\|u\|^{p}. $$
 This implies that $B(u)\rightarrow\infty$ as $\|u\|\rightarrow\infty$ on any finite dimensional subspace of $E$. The proof is complete.
\end{proof}

\begin{lemma}\label{ak}
  Assume that $(m_{1})-(m_{2})$, $(M_{1})-(M_{2})$, $(V_{1})-(V_{2})$ and $(f_{1})$ are satisfied. Then there exists a sequence $\rho_{k}\rightarrow0^{+}$ as $k\rightarrow\infty$ such that $$a_{k}(\lambda)=\inf_{u\in Z_{k},\|u\|=\rho_{k}}I_{\lambda}(u)>0,\ \forall k\in\mathbb{N},$$ and $$d_{k}(\lambda):=\inf_{u\in Z_{k},\|u\|\leq\rho_{k}}I_{\lambda}(u)\rightarrow0\ \text{as}\ k\rightarrow\infty\ \text{uniformly for}\ \lambda\in[1,2].$$
\end{lemma}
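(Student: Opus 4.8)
The plan is to follow the standard Fountain-theorem machinery, exploiting the sublinear growth $1<p<l$ together with the key quantity
\[
\beta_{k}=\sup_{u\in Z_{k},\,\|u\|=1}\|u\|_{L^{p}(\mathbb{R}^{N})},
\]
which tends to $0$ as $k\to\infty$ because of the compact embedding $E\hookrightarrow L^{p}(\mathbb{R}^{N})$ supplied by Corollary \ref{cor1} (valid since $1<p<l\le r$). First I would recall the elementary fact that if $\beta_{k}\to0$ were false, one could extract $u_{k}\in Z_{k}$, $\|u_{k}\|=1$, with $\|u_{k}\|_{L^{p}}\ge\delta>0$; since $u_{k}\rightharpoonup0$ in $E$ (each $u_{k}$ lies in $Z_{k}=\overline{\bigoplus_{j\ge k}X_{j}}$, so it is eventually orthogonal to any fixed basis element), compactness forces $u_{k}\to0$ in $L^{p}$, a contradiction.

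Next I would estimate $I_{\lambda}$ from below on $Z_{k}$. For $u\in Z_{k}$ with $\|u\|$ small, Lemma \ref{lem3} gives $G(u)+\Psi(u)\ge\xi_{0}([u]_{(s,M)})+\xi_{0}(\|u\|_{(V,M)})$; when $\|u\|\le1$ both semi-norm pieces are $\le1$, so $\xi_{0}$ acts as the $r$-th power and, after absorbing the equivalence of $\|\cdot\|$ with $[u]_{(s,M)}+\|u\|_{(V,M)}$, one gets $A(u)\ge c_{1}\|u\|^{r}$ for $\|u\|\le1$ and a suitable $c_{1}>0$. For the nonlinear term, $(f_{1})$ and Hölder give
\[
\lambda B(u)=\lambda\int_{\mathbb{R}^{N}}\xi(x)|u|^{p}\,dx\le 2\|\xi\|_{\frac{r}{r-p}}\,\|u\|_{L^{r}}^{p}\le c_{2}\,\|u\|_{L^{p}}^{p}
\]
once one also uses the compact (hence continuous) embedding into $L^{r}$; more directly, applying the definition of $\beta_{k}$, $B(u)\le c_{2}\beta_{k}^{p}\|u\|^{p}$ for $u\in Z_{k}$. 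Combining,
\[
I_{\lambda}(u)\ge c_{1}\|u\|^{r}-2c_{2}\beta_{k}^{p}\|u\|^{p},\qquad u\in Z_{k},\ \|u\|\le1.
\]
Now choose $\rho_{k}=\bigl(\tfrac{4c_{2}\beta_{k}^{p}}{c_{1}}\bigr)^{1/(r-p)}$ (the exponent is positive since $r>p$). Then $\rho_{k}\to0$ since $\beta_{k}\to0$, so for $k$ large $\rho_{k}\le1$, and on $\|u\|=\rho_{k}$ one finds $I_{\lambda}(u)\ge c_{1}\rho_{k}^{r}-\tfrac{c_{1}}{2}\rho_{k}^{r}=\tfrac{c_{1}}{2}\rho_{k}^{r}>0$, uniformly in $\lambda\in[1,2]$; this is $a_{k}(\lambda)>0$.

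Finally, for $d_{k}(\lambda)$ I would observe that on $Z_{k}\cap\{\|u\|\le\rho_{k}\}$ we have $I_{\lambda}(u)\ge -2c_{2}\beta_{k}^{p}\|u\|^{p}\ge -2c_{2}\beta_{k}^{p}\rho_{k}^{p}$, so
\[
0\ge d_{k}(\lambda)\ge -2c_{2}\beta_{k}^{p}\rho_{k}^{p},
\]
and since both $\beta_{k}\to0$ and $\rho_{k}\to0$, the right-hand side tends to $0$ uniformly in $\lambda\in[1,2]$; hence $d_{k}(\lambda)\to0$. The main obstacle I anticipate is the bookkeeping in the lower bound for $A$: one must be careful that $\xi_{0}$ is the $r$-power regime precisely when the relevant semi-norms are $\le1$ (which is why $\rho_{k}\le1$ matters), and that the norm $\|u\|=[u]_{(s,M)}+\|u\|_{(V,M)}$ is comparable to the two pieces on which Lemma \ref{lem3} operates — getting a clean single power $\|u\|^{r}$ rather than a sum of a $[u]^{r}$ and a $\|u\|_{(V,M)}^{r}$ term requires using that for $a,b\ge0$ with $a+b\le1$ one has $a^{r}+b^{r}\ge 2^{1-r}(a+b)^{r}$. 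Everything else is the routine Fountain-theorem estimate driven by $r>p$ and $\beta_{k}\to0$.
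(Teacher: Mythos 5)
Your argument is essentially the paper's own proof: the same H\"older estimate for $B(u)$, the same lower bound via $\xi_{0}$ in the regime $\|u\|\le 1$, the same choice $\rho_{k}\sim(\beta_{k}^{p})^{1/(r-p)}$ exploiting $p<r$, and the same two-sided squeeze $-2c_{2}\beta_{k}^{p}\rho_{k}^{p}\le d_{k}(\lambda)\le 0$. One slip should be corrected: you define $\beta_{k}$ with the $L^{p}(\mathbb{R}^{N})$ norm, but H\"older with $\xi\in L^{\frac{r}{r-p}}(\mathbb{R}^{N})$ produces $\|u\|_{L^{r}(\mathbb{R}^{N})}^{p}$, and on all of $\mathbb{R}^{N}$ the $L^{r}$ norm is not controlled by the $L^{p}$ norm (your intermediate inequality $\|\xi\|_{\frac{r}{r-p}}\|u\|_{L^{r}}^{p}\le c_{2}\|u\|_{L^{p}}^{p}$ fails in general, since $\xi$ is not assumed bounded). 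The fix is one line: set $\beta_{k}=\sup_{u\in Z_{k},\,\|u\|=1}\|u\|_{L^{r}(\mathbb{R}^{N})}$, which is exactly the quantity $l_{k}$ the paper uses; it still tends to $0$ because Corollary \ref{cor1} gives compactness of $E\hookrightarrow L^{\mu}(\mathbb{R}^{N})$ for every $1<\mu\le r$, and the rest of your argument then goes through verbatim. On the other side of the estimate you are actually more careful than the paper: your use of $a^{r}+b^{r}\ge 2^{1-r}(a+b)^{r}$ for $a+b\le 1$ to pass from $\xi_{0}([u]_{(s,M)})+\xi_{0}(\|u\|_{(V,M)})$ to a clean power of $\|u\|$ supplies the constant that the paper silently omits when it asserts this sum is $\ge\|u\|^{r}$.
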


\begin{proof}
  Using Lemma \ref{lem3}, for any $u\in Z_{k}$ and $\lambda\in [1,2]$, we can see that

  \begin{align}\label{I}
    I_{\lambda}(u)&\geq \xi_{0}([u]_{(s,M)})+\xi_{0}(\|u\|_{(V,M_{*})})-2\int_{\mathbb{R}^{N}}\xi(x)|u|^{p}dx\\
                  &\geq \xi_{0}([u]_{(s,M)})+\xi_{0}(\|u\|_{(V,M_{*})})-2\|\xi\|_{\frac{r}{r-p}}\|u\|_{L^{r}(\mathbb{R}^{N})}^{p}\nonumber.
  \end{align}
  Let \begin{equation}\label{lk}
        l_{k}=\sup_{u\in Z_{k},\|u\|=1}|u|_{L^{r}(\mathbb{R}^{N})},\ \forall k\in\mathbb{N}.
      \end{equation}

     By the next lemma, there hold
  \begin{equation}\label{lkcv}
    l_{k}\rightarrow0\ \text{as}\ k\rightarrow+\infty.
  \end{equation}

  Combining \eqref{I} and \eqref{lk}, we have
  \begin{equation}\label{I1}
    I_{\lambda}(u)\geq\xi_{0}([u]_{(s,M)})+\xi_{0}(\|u\|_{(V,M_{*})}-2\|\xi\|_{\frac{r}{r-p}}\ l_{k}^{p}\|u\|^{p},\ \forall k\in\mathbb{N}\ \text{and}\ (\lambda,u)\in [1,2]\times Z_{k}.
  \end{equation}
  For each $k\in\mathbb{N}$, choose
  \begin{equation}\label{rhok}
    \rho_{k}=(4 \|\xi\|_{\frac{r}{r-p}}l_{k}^{p})^{\frac{1}{r-p}}.
  \end{equation}
  Since $1<p<r$, then by \eqref{lkcv}, we have
                             \begin{equation}\label{rhok0}
                                \rho_{k}\rightarrow0\ \text{as}\ k\rightarrow+\infty,
                              \end{equation}
    and so, for $k$ large enough, we have $\rho_{k}\leq 1$. Then, by Lemma \ref{lem2},
    \begin{equation}\label{xi}
      \xi_{0}([u]_{(s,M)})+\xi_{0}(\|u\|_{(V,M_{*})}\geq \|u\|^{r},\ \text{when}\ \|u\|=\rho_{k}.
    \end{equation}
    By \eqref{I1}, \eqref{rhok} and \eqref{xi}, direct computation shows
    $$a_{k}(\lambda)=\inf_{u\in Z_{k},\|u\|=\rho_{k}}I_{\lambda}(u)\geq \frac{1}{2}\rho_{k}^{r}>0,\ \forall k\in\mathbb{N}.$$

     Besides, by \eqref{I1}, for each $k\in\mathbb{N}$, we have
     $$I_{\lambda}(u)\geq-2\|\xi\|_{\frac{r}{r-p}}\ l_{k}^{p}\|u\|^{p},$$
      for all $\lambda\in [1,2]$ and $u\in Z_{k}$ with $\|u\|\leq \rho_{k}$. Therefore,
      $$-2\|\xi\|_{\frac{r}{r-p}}\ l_{k}^{p}\|u\|^{p}\leq \inf_{u\in Z_{k},\|u\|\leq\rho_{k}} I_{\lambda}(u)\leq 0,\ \forall\lambda\in [1,2]\ \text{and}\ \forall k\in\mathbb{N}.$$
      Combining \eqref{lkcv} and \eqref{rhok0}, we have
      $$d_{k}(\lambda):=\inf_{u\in Z_{k},\|u\|\leq\rho_{k}}I_{\lambda}(u)\rightarrow0\ \text{as}\ k\rightarrow\infty\ \text{uniformly for}\ \lambda\in[1,2].$$
      The proof is complete.
\end{proof}

\begin{lemma}
We have that
  $$l_{k}:=\sup_{u\in Z_{k},\|u\|=1}|u|_{L^{r}(\mathbb{R}^{N})}\rightarrow0\ \text{as}\ k\rightarrow+\infty.$$
\end{lemma}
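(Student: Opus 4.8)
The plan is to combine the compact embedding $E\hookrightarrow L^{r}(\mathbb{R}^{N})$ from Corollary \ref{cor1} with the fact that the tail subspaces $Z_k$ collapse weakly to $\{0\}$; this is the classical argument used to verify hypothesis $(iii)$ of Fountain-type theorems.

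First I would note that $Z_{k+1}\subseteq Z_k$, so $(l_k)$ is nonincreasing and bounded below by $0$; hence $l_k\to l$ for some $l\geq 0$, and it suffices to show $l=0$. For each $k$, since the supremum defining $l_k$ need not be attained, I pick $u_k\in Z_k$ with $\|u_k\|=1$ and $|u_k|_{L^{r}(\mathbb{R}^{N})}\geq l_k-\frac{1}{k}$.

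The heart of the argument is to show $u_k\rightharpoonup 0$ in $E$. Here I would use that $E$ is reflexive — the condition $(m_2)$ with $1<l\leq r<N$ makes both $M$ and its conjugate $\overline{M}$ satisfy the $\triangle_{2}$ condition, so $L^{M}$, and with it $W^{s,M}(\mathbb{R}^{N})$ and $E$, are reflexive — so the bounded sequence $(u_k)$ admits, along any subsequence, a further subsequence with $u_k\rightharpoonup u$ in $E$. For each fixed $m$, $u_k\in Z_m$ whenever $k\geq m$, and $Z_m$, being a closed linear subspace, is convex and hence weakly closed, so $u\in Z_m$; as $m$ is arbitrary, $u\in\bigcap_{m\in\mathbb{N}}Z_m=\{0\}$, the last equality being a standard property of the decomposition $E=\overline{\bigoplus_{j}X_j}$. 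Since every weakly convergent subsequence has limit $0$, the whole sequence satisfies $u_k\rightharpoonup 0$ in $E$.

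To conclude, I invoke Corollary \ref{cor1}: the embedding $E\hookrightarrow L^{r}(\mathbb{R}^{N})$ is compact, so $u_k\to 0$ strongly in $L^{r}(\mathbb{R}^{N})$, i.e. $|u_k|_{L^{r}(\mathbb{R}^{N})}\to 0$. Passing to the limit in $l_k-\frac{1}{k}\leq|u_k|_{L^{r}(\mathbb{R}^{N})}$ then yields $0\leq l\leq 0$, whence $l_k\to 0$. The only delicate point is the weak convergence $u_k\rightharpoonup 0$, which rests on the reflexivity of $E$ and on $\bigcap_{m}Z_m=\{0\}$ for the Fountain-theorem decomposition; the monotonicity of $(l_k)$ and the final limit passage are immediate once the compact embedding is available.
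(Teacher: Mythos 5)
Your proposal is correct and follows essentially the same route as the paper: monotonicity of $(l_k)$, a near-maximizing sequence $u_k\in Z_k$ with $\|u_k\|=1$, the weak convergence $u_k\rightharpoonup 0$ coming from the structure of the $Z_k$, and then the compact embedding of $E$ into $L^{r}(\mathbb{R}^{N})$ to force $l=0$. The only difference is that you spell out the justification of $u_k\rightharpoonup 0$ (reflexivity of $E$ via $\triangle_2$, weak closedness of the $Z_m$, and $\bigcap_m Z_m=\{0\}$), which the paper leaves implicit.
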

\begin{proof}
  It is clear that $l_{k}$ is decreasing with respect to $k$ so there exist $l\geq0$ such that $l_{k}\rightarrow0$ as $k\rightarrow+\infty$. For any $k\geq 0$, there exists $u_{k}\in Z_{k}$ such that $\|u_{k}\|=1$ and $\|u_{k}\|_{L^{r}(\mathbb{R}^{N})}\geq\frac{l_{k}}{2}$. By definition of $Z_{k}$, $u_{k}\rightharpoonup0$ in $E$. Lemma \ref{lem1} implies that $u_{k}\rightarrow0$ in $L^{r}(\mathbb{R}^{N})$. Thus we proved that $l=0$.
\end{proof}

\begin{lemma}\label{bk}
   Under the hypotheses and the sequence $\rho_{k}$ of in Lemma \ref{ak}, there exists $r_{k}>\rho_{k}$ for any $k\in\mathbb{N}$ such that $$b_{k}(\lambda)=\max_{u\in Y_{k},\|u\|=r_{k}}I_{\lambda}(u)<0.$$
\end{lemma}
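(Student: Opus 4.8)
The plan is to exploit that $Y_k$ is finite dimensional, so by Lemma \ref{Bpositive} we have a genuine coercive lower bound $B(u)\geq \epsilon_k^2\|u\|^p$ on $Y_k\setminus\{0\}$, while the "good" part $A(u)=G(u)+\Psi(u)$ grows only like a power between $l$ and $r$. Since $p<l\leq r$, the subhomogeneous term $B(u)$ dominates $A(u)$ as $\|u\|\to\infty$ along $Y_k$, which will force $I_\lambda(u)\to-\infty$ uniformly in $\lambda\in[1,2]$.

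Here are the steps. First, fix $k$ and work on the finite dimensional space $Y_k$. By Lemma \ref{Bpositive} there is $\epsilon_k>0$ with $B(u)\geq \epsilon_k^2\|u\|^p$ for all $u\in Y_k\setminus\{0\}$; since $\lambda\geq 1$, this gives $\lambda B(u)\geq \epsilon_k^2\|u\|^p$. Second, bound $A(u)$ from above: by Lemma \ref{lem3}, $G(u)\leq\xi_1([u]_{(s,M)})$ and $\Psi(u)\leq\xi_1(\|u\|_{(V,M)})$, and since $[u]_{(s,M)}\leq\|u\|$ and $\|u\|_{(V,M)}\leq\|u\|$, for $\|u\|\geq 1$ we get $\xi_1([u]_{(s,M)})+\xi_1(\|u\|_{(V,M)})\leq 2\|u\|^r$. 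Hence for $\|u\|\geq 1$ and all $\lambda\in[1,2]$,
\begin{equation}\label{Ibk}
I_\lambda(u)=A(u)-\lambda B(u)\leq 2\|u\|^r-\epsilon_k^2\|u\|^p.
\end{equation}
Third, observe that because $Y_k$ is finite dimensional the unit sphere $\{u\in Y_k:\|u\|=1\}$ is compact, which is exactly what legitimizes the uniform constant $\epsilon_k$ above; alternatively one may simply invoke \eqref{epsilon} of Lemma \ref{Bpositive} directly with $F=Y_k$. Fourth, since $1<p<l\leq r$, the right side of \eqref{Ibk} is a real function of $t=\|u\|$ of the form $2t^r-\epsilon_k^2 t^p$ which tends to $-\infty$ as $t\to\infty$; wait, this is the wrong sign comparison — note that $r>p$ makes $t^r$ the dominant term, so this bound does not suffice. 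The correct route is to bound $A(u)$ above by a power $\leq r$ but to bound $B(u)$ below using the genuinely homogeneous form $B(u)=\int_{\mathbb{R}^N}\xi(x)|u|^p\,dx$, and to compare on a fixed one-dimensional ray: on a finite dimensional space all norms are equivalent, so $\|u\|_{L^p(\text{supp}\,\xi\text{-weighted})}\geq c_k\|u\|$, giving $B(u)\geq c_k^p\|u\|^p$, whereas $A$ restricted to $Y_k$ satisfies $A(u)\leq C_k\|u\|^r$ only for $\|u\|\geq 1$ — but for the argument we instead scale $u=t w$ with $\|w\|=1$ and use Lemma \ref{lem2}: $A(tw)\leq t^r A(w)\cdot(\text{const})$ is false in general, so more carefully $G(tw)\leq \xi_1(t)G(w)$-type bounds give $A(tw)\leq C_k\,t^r$ for $t\geq 1$. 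Then $I_\lambda(tw)\leq C_k t^r - c_k^p t^p$; since $r>p$ this still does not go to $-\infty$.

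The genuine resolution: the hypothesis is $p<l$, and the relevant comparison is that $\Psi$ and $G$ are bounded below by powers with exponent at least $l>p$ only from below, not above; for the \emph{upper} bound one should instead use that on the finite dimensional space $Y_k$, for $t$ small the $\xi_0$ bounds apply and for $t$ large the $\xi_1$ bounds apply, and actually $I_\lambda(tw)\to -\infty$ because... the correct statement needs $A$ to be \emph{$r$-superhomogeneous} no — I will instead argue as follows: choose $u=tw$, $w\in Y_k$ fixed with $\|w\|=1$; for $t\geq 1$, $A(tw)\leq \xi_1(t)\big(G(w)+\Psi(w)\big)\cdot(\text{from }\xi_1(t)=t^r)=t^r A(w)$ — but wait Lemma \ref{lem2} gives $M(\rho t)\leq\xi_1(\rho)M(t)$ hence $G$ scales sub-$r$-homogeneously from above only when the inner argument scales, which it does: $G(tw)\leq\xi_1(t)G(w)=t^rG(w)$ for $t\geq1$. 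So $I_\lambda(tw)\leq t^rA(w)-t^p c_k^p\to+\infty$. This shows the lemma as literally stated (with $b_k(\lambda)<0$) is \textbf{false unless} $r<p$, contradicting $(f_1)$. I therefore expect the actual proof to \emph{reverse} the roles: bound $A$ above by an $l$-power times a constant is impossible; the resolution must be that I have the monotonicity of $\xi_0,\xi_1$ backwards for large arguments — indeed $\xi_1(t)=\max\{t^l,t^r\}=t^r$ for $t\geq1$, so $A$ grows like $t^r$, and $B$ like $t^p$ with $p<r$, so $I_\lambda\to+\infty$ on rays. Hence the only way Lemma \ref{bk} can hold is if in fact the intended inequality uses $B$ growing \emph{faster}, i.e. the paper's $(f_1)$ combined with the embedding forces a different estimate on $A$ near the relevant scale; the main obstacle — and the step I would scrutinize hardest — is establishing an \emph{upper} bound on $A(u)$ over $Y_k$ that is $o(B(u))$ as $\|u\|\to\infty$, which appears to require $p>r$. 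Lacking that, I would instead prove the lemma by taking $r_k$ and noting $I_\lambda\leq I_1$ is false too; so concretely my plan reduces to: (1) fix $w\in Y_k$, $\|w\|=1$; (2) show $t\mapsto I_\lambda(tw)$ has the form $\leq C_1 t^{r}-C_2 t^{p}$ is the wrong sign, so instead derive $I_\lambda(tw)=A(tw)-\lambda B(tw)$ and use that on $Y_k$, $A(tw)$ is comparable to $t^{l}$ for $t$ \emph{small} — irrelevant — therefore the honest conclusion is that the key inequality must be $p<l\leq r<$ something making $B$ dominant, and the main difficulty is precisely reconciling the exponents; I would resolve it by checking whether $(f_1)$ should read $1<l<p$ and proceeding with the coercivity of $B$ beating the $r$-growth of $A$ once $p>r$, giving $I_\lambda(tw)\leq C_1t^r - C_2 t^p\to-\infty$, then picking $r_k$ large with $\max_{\|u\|=r_k,u\in Y_k}I_\lambda(u)<0$ uniformly in $\lambda$ by compactness of the sphere in $Y_k$ and continuity in $(\lambda,w)$.
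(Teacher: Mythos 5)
Your proposal does not reach a proof, and the place where it goes astray is identifiable: you are looking for the negativity of $I_\lambda$ at \emph{large} radii, whereas in this sublinear setting ($1<p<l\leq r$) it occurs at \emph{small} radii. Your computation that on a ray $u=tw$, $\|w\|=1$, one has $A(tw)\leq C t^{r}$ and $\lambda B(tw)\geq c\,t^{p}$ with $p<r$, so that $I_\lambda(tw)\to+\infty$ as $t\to\infty$, is correct — but it is irrelevant, and it does not make the lemma false. The correct mechanism is the following. Recall from Lemma \ref{ak} that $\rho_k\to0^{+}$, so the radius $r_k$ is to be taken \emph{small} (the inequality ``$r_k>\rho_k$'' in the statement is a typo inherited from the statement of the variant fountain theorem; in Zou's sublinear version one needs $0<r_k<\rho_k$, and that is exactly what the paper's proof produces). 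For $\|u\|=t\leq1$ one has $\xi_1(t)=\max\{t^{l},t^{r}\}=t^{l}$, so Lemma \ref{lem3} gives the upper bound $A(u)=G(u)+\Psi(u)\leq \xi_1([u]_{(s,M)})+\xi_1(\|u\|_{(V,M)})\leq C\|u\|^{l}$, while \eqref{epsilon} applied to $F=Y_k$ (finite dimensional) yields $\epsilon_k>0$ with $\lambda B(u)\geq B(u)\geq\epsilon_k^{2}\|u\|^{p}$. Hence
\begin{equation*}
I_\lambda(u)\;\leq\;C\|u\|^{l}-\epsilon_k^{2}\|u\|^{p}\;=\;\|u\|^{p}\bigl(C\|u\|^{l-p}-\epsilon_k^{2}\bigr)\;<\;0
\end{equation*}
as soon as $\|u\|^{l-p}<\epsilon_k^{2}/C$, which is achievable because $l>p$. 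Choosing $r_k<\min\{\rho_k,\ (\epsilon_k^{2}/2C)^{1/(l-p)},\ 1\}$ gives $b_k(\lambda)<0$ uniformly in $\lambda\in[1,2]$.

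In short: the inequality $p<l$ makes $t^{p}$ dominate $t^{l}$ as $t\to0^{+}$, not as $t\to\infty$; your self-diagnosed obstacle (an upper bound $A(u)=o(B(u))$ as $\|u\|\to\infty$) is not needed, and your suggestion that the hypothesis should read $l<p$ is wrong. The proposal as written contains no complete argument and ends by doubting the statement, so it must be counted as having a genuine gap.
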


\begin{proof}
  By using the fact that $Y_{k}$ is with finite dimensional and \eqref{epsilon}, we can find $\epsilon_{k}>0$ such that
  \begin{equation}\label{epsilonk}
  \meas(\Lambda_{u}^{k})\geq\epsilon_{k},\ \forall u\in Y_{k}\setminus{0},
  \end{equation}
  where $\Lambda_{u}^{k}:=\{x\in\mathbb{R}^{N}:\ \xi(x)|u(x)|^{p}\geq\epsilon_{k}\|u\|^{p}\}$. By \eqref{epsilonk}, for any $k\in\mathbb{N}$,
  we have
  \begin{align}\label{Ilambda}
    I_{\lambda}(u) & \leq \xi_{1}([u]_{(s,M)})+\xi_{1}(\|u\|_{(V,M_{*})})-\int_{\mathbb{R}^{N}}\xi(x)|u|^{p}dx\\
                   &\leq \xi_{1}([u]_{(s,M)})+\xi_{1}(\|u\|_{(V,M_{*})})-\int_{\Lambda_{u}^{k}}\epsilon_{k}\|u\|^{p} dx\nonumber\\
                   &\leq \xi_{1}([u]_{(s,M)})+\xi_{1}(\|u\|_{(V,M_{*})})-\epsilon_{k}\|u\|^{p}.\meas(\Lambda_{u}^{k})\nonumber\\
                   &\leq \xi_{1}([u]_{(s,M)})+\xi_{1}(\|u\|_{(V,M_{*})})-\epsilon_{k}^{2}\|u\|^{p}\nonumber\\
                   &\leq \|u\|^{l}-\epsilon_{k}^{2}\|u\|^{p}\leq -\|u\|^{l}\nonumber
  \end{align}
  for all $u\in Y_{k}$ with $\|u\|\leq\min\{\rho_{k},2^{-\frac{1}{l-p}}\epsilon_{k}^{\frac{2}{l-p}},1\}$.
  If we choose $$r_{k}<\min\{\rho_{k},2^{-\frac{1}{l-p}}\epsilon_{k}^{\frac{2}{l-p}},1\},\ \forall k\in\mathbb{N},$$
  and using \eqref{Ilambda}, we deduce that $$b_{k}(\lambda)=\max_{u\in Y_{k},\|u\|=r_{k}}I_{\lambda}(u)=-r_{k}^{l}<0,\ \forall k\in\mathbb{N}.$$

\end{proof}

\begin{proof}[{\bf Proof of Theorem \ref{thm1}}:]
From Lemma \ref{lem1} we can see that $I_{\lambda}$ maps bounded sets to bounded sets uniformly for $\lambda\in[1,2]$. Moreover, $I_{\lambda}$ is even. Then the condition $(i)$ in Theorem \ref{Fountain} is satisfied. Besides, Lemma \ref{Bpositive} shows that the condition $(ii)$ holds while Lemma \ref{ak} together with Lemma \ref{bk} implies that the condition $(iii)$ holds. Therefore, by Theorem \ref{Fountain}, for each $k\in\mathbb{N}$, there exist $\lambda_{n}\rightarrow1,$ $u_{\lambda_{n}}\in Y_{n}$ such that
\begin{equation}\label{lambdan}
 I_{\lambda_{n}}^{'}|_{Y_{n}}(u_{\lambda_{n}})=0,\ I_{\lambda_{n}}(u_{\lambda_{n}})\rightarrow c_{k}\in [d_{k}(2),b_{k}(1)]\ \text{as}\ n\rightarrow\infty.
\end{equation}

\textbf{Claim 2:} The sequence $(u_{\lambda_{n}})_{n\in\mathbb{N}}$ obtained in \eqref{lambdan} is bounded in $E$.\\

 For the sake of notational simplicity, in what follows we always set $u_{n}=u_{\lambda_{n}}$ for all $n\in\mathbb{N}$.\\
In fact, combining \eqref{lambdan}, Lemmas \ref{lem3} and \ref{lem1} and the H\"{o}lder inequality, we obtain
\begin{align*}
  \|u_{n}\|^{\theta} & \leq I_{\lambda_{n}}(u_{n})+\lambda_{n}\int_{\mathbb{R}^{N}}\xi(x)|u_{n}(x)|^{p}dx\leq C_{0}+2\|\xi\|_{\frac{r}{r-p}}\|u_{n}\|_{L^{r}(\mathbb{R}^{N})}^{p}\\
                     & \leq C_{0} + 2\tau^{p}\|\xi\|_{\frac{r}{r-p}}\|u_{n}\|^{p}
\end{align*}
for some $C_{0}>0$ and $\tau>0$, where $\theta=r$ or $\theta=l$.
 Therefore, the claim above is true since $p<\theta$.\\

 \textbf{Claim 3:} The sequence $(u_{n})$ has a strong convergent subsequence for every $k$. \\

In view of Claim $2$, without loss of generality, we may assume \begin{equation}\label{unu0}
                                                                  u_{n}\rightharpoonup u_{0}\ \text{as}\ n\rightarrow+\infty
                                                                \end{equation}
                                                                for some $u_{0}\in E$.
Hence \begin{align*}
	\langle I_{\lambda_{n}}^{'}(u_{n})- I_{\lambda_{n}}^{'}(u_{0}),u_{n}-u_{0}\rangle&=\langle G^{'}(u_{n})-G^{'}(u_{0}),u_{n}-u_{0}\rangle+\int_{\mathbb{R}^{N}}V(x)[m(u_{n})u_{n}-m(u_{0})u_{0}](u_{n}-u_{0})dx\\
&-\lambda_{n}\int_{\mathbb{R}^{N}}[f(x,u_{n})-f(x,u_{0})](u_{n}-u_{0})dx\rightarrow0,\ n\rightarrow+\infty.
	\end{align*}

Therefore, by Lemma \ref{lem4}, we can deduce that $u_{n}$ converges strongly to $u_{0}$ in $E$.\\

Now from the last assertion of Theorem \ref{Fountain}, we know that $I_{1}$ has infinitely many nontrivial critical points. Therefore,
\eqref{eq} possesses infinitely many nontrivial solutions. The proof of Theorem \ref{thm1} is complete.
\end{proof}

\noindent \textbf{\textsc{Sabri Bahrouni}} and \textbf{\textsc{Hichem Ounaies}}\\
Mathematics Department, Faculty of Sciences, University of Monastir,\\
5019 Monastir, Tunisia
 (sabri.bahrouni@fsm.rnu.tn); (hichem.ounaies@fsm.rnu.tn)

\end{document}